\documentclass[11pt]{amsart}

\usepackage{amsmath,amssymb,amsthm,mathtools}
\usepackage{mathrsfs}
\usepackage{enumitem}
\usepackage{geometry}
\usepackage{microtype}
\usepackage{hyperref}
\usepackage[pagewise]{lineno}
\usepackage{titlesec}
\titleformat{\section}{\normalfont\Large\bfseries\centering}{\thesection}{1em}{}
\newcommand{\appendixsectionformat}{%
  \titleformat{\section}
    {\normalfont\Large\bfseries\centering}
    {Appendix \thesection}
    {1em}
    {}%
}
\numberwithin{equation}{section}
\newtheorem{conjecture}{Conjecture}[section]
\newtheorem{definition}{Definition}[section]
\newtheorem{theorem}{Theorem}[section]

\newtheorem{lemma}[theorem]{Lemma}
\newtheorem{corollary}[theorem]{Corollary}
\theoremstyle{remark}

\newcommand{\R}{\mathbb{R}}

\newcommand{\dd}{\,\mathrm{d}}
\newcommand{\sgn}{\operatorname{sgn}}

\title[Limit cycles of continuous piecewise linear Li\'enard systems]{Bounds on the maximum number of limit cycles\\
of piecewise linear Li\'enard systems\\[3pt]
I. The continuous case}
\author[Cai et al.]{Shiyu Cai$^1$, Hebai Chen$^1$, Jie Jin$^1$, Yuhuan Lu$^1$}
\date{}

\address{  $^1$ School of Mathematics and Statistics, HNP-LAMA, Central South University,
	Changsha, Hunan 410083, P. R. China}

\email{cai\_shiyu@fzu.edu.cn,
chen\_hebai@csu.edu.cn,
    jin\_jie@csu.edu.cn,
	lu\_yuhuan@csu.edu.cn}

\subjclass[2020]{34C07, 37C20, 37G15}

\keywords{Continuous piecewise linear Li\'enard systems;
limit cycles; Tonnelier's conjecture;
multiscale perturbations; Pfaffian functions}

\begin{document}

	\begin{abstract}
		This paper concerns the planar Li\'enard system
		\(\dot x=y-F(x),\ \dot y=-x\), where \(F(x)\) is a continuous piecewise linear
		function with exactly \(n\) fold points. Tonnelier [SIAM J. Appl. Math. 63 (2002)] conjectured that the maximum
		number of limit cycles of the system is \(n\) when \(F\) has exactly \(n\)
		fold points. The conjecture was confirmed for \(n=1\) and \(n=2\) in
		[J. Nonlinear Sci. 25 (2015)] and
		[J. Lond. Math. Soc. 113 (2026)], respectively, whereas the case
		\(n\ge3\) remained open. In this paper, we show that the maximal number
		of limit cycles is at least \(|3n-4|\) for \(n\in\mathbb N^+\), thereby
		disproving Tonnelier's conjecture for \(n\ge3\). The proof reveals a
		unified multiscale perturbation mechanism underlying the creation and
		coexistence of multiple limit cycles. Moreover, we prove that the number of limit cycles admits the finite upper bound \(2^{28(n+1)^2}\).
	\end{abstract}
    
	\maketitle
	
	\section{Introduction and main results}
	
	The study of limit cycles is one of the central topics in the qualitative
	theory of planar differential systems. It is closely related to the second
	part of Hilbert's sixteenth problem, which asks for the maximum number and
	relative positions of limit cycles of planar polynomial differential systems
	of a given degree (see, for instance, \cite{Arnold,H,Smale}).  Despite extensive efforts over more than a century, this problem remains open
	even for quadratic polynomial systems. 

Because of the formidable difficulty of the original problem, Arnold \cite{Arnold} proposed in 1977 a weakened version of Hilbert's sixteenth
problem for small perturbations of polynomial Hamiltonian systems. Consider
\[
\dot x=H_y+\varepsilon P(x,y),\qquad
\dot y=-H_x+\varepsilon Q(x,y),
\qquad 0<|\varepsilon|\ll1,\]
where \(H,P\) and \(Q\) are polynomials. Suppose that the unperturbed
Hamiltonian system has a period annulus formed by ovals
\(
\Gamma_h\subset{H(x,y)=h},\, h\in\Sigma,
\)
where \(\Sigma\) is the interval of energy values corresponding to the period
annulus. The first-order displacement function is, up to a sign convention,
the Abelian integral
\(
I(h)=\oint_{\Gamma_h}\bigl(Q(x,y)\,dx-P(x,y)\,dy\bigr).
\)
Hence, for sufficiently small \(\varepsilon\), the bifurcating limit cycles
are governed by the isolated zeros of \(I(h)\). Arnold's weakened, or
infinitesimal, Hilbert's sixteenth problem asks for a uniform upper bound,
in terms of the polynomial degrees, on the number of such zeros.
Let \(Z(m,n)\) denote the maximal number of isolated real zeros of the
corresponding Abelian integrals, and set
\(
A(n)=Z(n+1,n).
\)
Khovanskii \cite{K} and Varchenko \cite{V} independently proved that
\(
Z(m,n)<\infty
\)
for fixed \(m\) and \(n\). Later, Binyamini, Novikov and Yakovenko \cite{BNY} obtained the explicit estimate
\(
A(n)\leq 2^{2^{\operatorname{Poly}(n)}},\)
where \(
\operatorname{Poly}(n)=O(n^p)\) and  \( p\leq61.
\)

Another influential approach, emphasized by Smale \cite{Smale,S2}, is to
restrict the original problem to special classes of polynomial differential
systems for which the limit-cycle problem remains nontrivial but is more
tractable. One of the most important such families is the polynomial
Li\'enard differential system
	\begin{equation}\label{eq}
		\dot{x} = F(x) - y, \qquad \dot{y} = x,
	\end{equation}
	where \(F(x)\) is a polynomial in \(x\).
	When \(F\) is a polynomial of degree \(n\),
	Lins, de Melo and Pugh \cite{LMP} conjectured that system
	\eqref{eq} has at most
	\(
	\left[\frac{n-1}{2}\right]
	\)
	limit cycles. 
     This conjecture has stimulated extensive research on the number and bifurcation of limit cycles of polynomial Li\'enard systems. For the low-degree cases, Lins et al.~\cite{LMP} proved that system \eqref{eq} has no limit cycles for \(n=2\) and at most one limit cycle for \(n=3\), while Li and Llibre \cite{LL} later confirmed the conjecture for \(n=4\). For \(n=5\), Li and Lu \cite{LL2} showed that the cyclicity of non-degenerate slow--fast cycles, measured through the slow divergence integral, is at most two. On the other hand,  Dumortier et al.~\cite{DM} constructed, by geometric singular perturbation methods, examples with more limit cycles than predicted by the conjecture for odd \(n\geq7\), and De Maesschalck and Huzak \cite{DM2} subsequently obtained slow--fast Li\'enard systems with \(n-2\) limit cycles for \(n\geq6\). Thus, although substantial progress has been achieved, the exact maximum number of limit cycles is still unknown in general, in particular for degrees \(n\geq5\).

	Besides smooth polynomial models, piecewise linear differential systems have
	received considerable attention because they provide a natural compromise
	between analytical tractability and rich nonlinear dynamics. They arise
	naturally in control theory, electrical circuits, mechanical systems and
	mathematical biology, and they also serve as useful approximations of smooth
	nonlinear systems. One of the most prominent examples is the piecewise
	linear approximation of the FitzHugh--Nagumo model introduced by McKean
	\cite{M1,M2}; see also Tonnelier \cite{T}.
	Although the vector field is linear in each region, switching between
	different linearity zones may produce complicated global phenomena,
	including Hopf  bifurcations, grazing bifurcations and multiple limit cycles (see \cite{CFJM,CJT,LPV,LPV2}).
	
	Under the transformation \((x,y,t)\mapsto(x,y,-t)\), system \eqref{eq} becomes
	\begin{equation}\label{sys}
		\dot x=y-F(x),\qquad \dot y=-x.
	\end{equation}
	Systems \eqref{eq} and \eqref{sys} have exactly the same number and
	configuration of limit cycles, whereas the orientation of every orbit and the stability of every limit cycle are reversed. Thus, all results concerning the number of limit cycles can be formulated equivalently for either convention. 
	
	We first introduce the terminology for the
switching values of a piecewise linear function.
	\begin{definition}\label{switching points}
		Let \(F:\mathbb{R}\to\mathbb{R}\) be a piecewise linear function, and let \(x_i\) be a switching value of \(F\), corresponding to the switching line \(x=x_i\) of system \eqref{sys}. Denote 
        \[ 
        F(x_i^-)=\lim_{x\to x_i^-}F(x), \qquad F(x_i^+)=\lim_{x\to x_i^+}F(x).
        \] 
        When \(F\) is continuous at \(x_i\), denote its one-sided
derivatives by 
        \[
        F'(x_i^-) = \lim_{h\to0^-} \frac{F(x_i+h)-F(x_i)}{h}, \qquad F'(x_i^+) = \lim_{h\to0^+} \frac{F(x_i+h)-F(x_i)}{h}. 
        \] 
		\begin{enumerate}[
    labelindent=0pt,
    labelwidth=1.5em,
    labelsep=0.4em,
    leftmargin=1.9em,
    align=left
]
			\item[\textnormal{\textbf{(i)}}] The switching value \(x_i\) is called a \emph{fold point}
			of \(F\) if
			\(F(x_i^-)=F(x_i^+)=F(x_i)\) and \(F'(x_i^-)\neq F'(x_i^+).\)
			\item[\textnormal{\textbf{(ii)}}] The switching value \(x_i\) is called a \emph{jump point}
			of \(F\) if
			\(
			F(x_i^-)\neq F(x_i^+).
			\)	
		\end{enumerate}
	\end{definition}
	Our work is concerned with the number of limit cycles of the Li\'enard system \eqref{sys},  where \(F(x)\) is continuous and piecewise linear  with exactly  \(n\) fold points.  
	This problem goes back to Tonnelier \cite{T}. In his study of a piecewise linear version of the
	FitzHugh--Nagumo system, Tonnelier was led to formulate a general conjecture relating the number of switching points of \(F\) to the number of limit cycles. In the continuous case, the conjecture can be stated as follows:
	\begin{conjecture}[Tonnelier \cite{T}]
		System \eqref{sys}, with \(F(x)\) being continuous and piecewise linear on \(n+1\) intervals, has up to \(n\) limit cycles.
	\end{conjecture}
	Llibre et al. \cite{LPV} proved that system \eqref{sys}, in which \(F(x)\) has a unique fold point and no jump points, admits at most one limit cycle. Chen et al. \cite{CFJM} further investigated the global dynamics of system \eqref{sys}, in which \(F(x)\) has two fold points and no jump points, and proved that the maximum number of limit cycles is exactly two. Therefore,  Tonnelier's conjecture has been positively resolved for \(n = 1\) and \(n = 2\). However, the cases \(n \ge 3\) had remained open. 
	A further important step was obtained recently by 
	Chen et al. \cite{CLZZ}. They proved that, for every positive integer \(n\), there exists a continuous piecewise linear function \(F\) with \(n\) fold points such that system~\eqref{sys} possesses \(n\) limit cycles. 
	The above results naturally raise the question of whether the number of
	limit cycles of system~\eqref{sys} is indeed bounded by the number of fold points of \(F\). 
    
   Let \(H_n\) denote the supremum of the number of limit cycles of system~\eqref{sys}
when \(F\) is  a continuous piecewise linear function and has exactly \(n\) fold points.
In this paper, we establish the lower bound of $|3n-4|$ for \(H_n\).
	Our results show that Tonnelier's conjecture fails for \(n\geq3\) and reveal new mechanisms responsible for the creation of multiple limit cycles.
	Furthermore, we prove that \(H_n\) admits a finite upper bound. 
Two results on the upper bound and the lower bound are respectively obtained, as stated in the following two theorems.

\begin{theorem}\label{thm:main}
		For every positive integer $n$,
		\[
		H_n\ge |3n-4|.
		\]
	\end{theorem}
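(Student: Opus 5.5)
The case $n=1$ gives $|3n-4|=1$ and $n=2$ gives $2$. Both are covered by the known constructions, so the sharp results of \cite{LPV} and \cite{CFJM} together with the examples of \cite{CLZZ} already give $H_1\ge1$ and $H_2\ge2$. The real content is $n\ge3$, where I must produce, for each $n$, a continuous piecewise linear $F$ with exactly $n$ fold points for which system \eqref{sys} has at least $3n-4$ limit cycles. The plan is an inductive, multiscale construction. I start from a base configuration near the origin that already carries a few limit cycles. I then add fold points one at a time at increasingly separated scales, and check that each new fold point creates three new hyperbolic limit cycles while the previously constructed ones persist.

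\textbf{Per-fold mechanism.} In a zone where $F$ is linear with slope $a$, the flow is linear, so the Poincaré half-maps between switching lines are given by explicit expressions in $e^{\lambda t}$, $\cos$ and $\sin$. These are Pfaffian-type functions whose zeros can be counted. I would take the slopes in the outer zones small, of order $\varepsilon$, so that far from the origin the system is a perturbation of the linear center $\dot x=y,\ \dot y=-x$. The first-order displacement along the circle $x^2+y^2=r^2$ is then
\[
M(r)=\int_0^{2\pi} F(r\cos\theta)\,r\cos\theta\,\dd\theta .
\]
This is an explicit piecewise-analytic function of $r$, and its derivative changes form each time $r$ crosses some $|x_i|$. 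I would place fold points symmetrically or near-symmetrically and alternate slope signs. The aim is a configuration in which each newly added fold point $x_k$, with $|x_k|$ much larger than all earlier ones, contributes three simple sign changes of $M$ on the range $r\in(|x_{k-1}|,\infty)$ that it controls. If the pure first-order function cannot produce three zeros per fold, I would instead use three scales around each new fold: a slope $\varepsilon$, a slope $\varepsilon^2$ correction, and a position offset. Relative to each other, these behave like a local Hopf or grazing unfolding and a second-order Melnikov correction.

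\textbf{Induction and persistence.} Suppose the configuration with $n-1$ folds has $3(n-1)-4$ hyperbolic limit cycles, all contained in a disk of radius $R_{n-1}$. Add the $n$-th fold at $|x_n|\gg R_{n-1}$, with a slope change small enough that the vector field inside that disk is unchanged. Since $F$ is altered only for $|x|>R_{n-1}$, the old cycles in the interior are untouched, and hyperbolicity is preserved under the small change affecting orbits crossing the new zone. The new cycles are obtained as simple zeros of the rescaled displacement function at scale $|x_n|$. After rescaling $r=|x_n|\rho$, the inner structure contributes only a lower-order term in $1/|x_n|$, because the inner region's effect on large orbits scales down. A final check confirms that $F$ has exactly $n$ fold points: no slope difference vanishes, and continuity holds by construction. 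Starting from a base configuration with $n=2$ folds and $2$ cycles is too strong to combine with $+3$ per fold, since that would give $3n-4$ only with an offset. So I would anchor the induction at $n=3$ with $5$ cycles, built directly by the same three-scale analysis, or equivalently treat the first two folds jointly as producing $2$ cycles.

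\textbf{Main obstacle.} The hardest step is showing that a single added fold yields three simple zeros, not just one, of the relevant displacement function. This needs an explicit computation of the local displacement expansion to second or third order in the small parameters near the new switching lines, and then a proof that the resulting coefficient functions are independent, for instance via a nonvanishing Wronskian or Jacobian at a chosen point, so that independent parameters can place three simple zeros. I would try first to reduce to a one-dimensional model function of $\rho$ with three free coefficients and verify nondegeneracy by direct evaluation.
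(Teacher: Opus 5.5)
\textbf{There is a genuine gap: the per-fold step is unproved, and the separated-scale induction undermines it rather than supporting it.}

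Your proposal does not prove the statement. The step that carries all the weight is that a new fold at $|x_k|\gg R_{k-1}$ creates three new hyperbolic limit cycles. You leave this as the ``main obstacle'', and the scale-separation heuristic you give for it actually argues against it.

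In the normalization of Lemma~\ref{lm:average}, the first-order averaged function has the form $\pi+\sum_i e_iK(|x_i|/r)$, with $K(z)=\arccos z-z\sqrt{1-z^2}$ for $z<1$ and $K=0$ for $z\ge1$. Right and left folds contribute the same term at this order.

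\emph{Collapsing the inner folds.} At the scale $r=|x_k|\rho$, as you propose, this function tends to $C_0$ for $\rho<1$ and to $C_0+e_kK(1/\rho)$ for $\rho>1$. Here $C_0$ is $\pi/2$ times the sum of the two outer slopes of the previous configuration. Since $K'(z)=-2\sqrt{1-z^2}<0$, the limit has at most one zero in $\rho>1$, and none in $\rho<1$ unless $C_0=0$. This is exactly the heuristic behind Tonnelier's conjecture.

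\emph{Why tuning does not rescue the induction.} More zeros at the new scale are possible only if $C_0$ and $e_k$ are tuned down to the size of the inner corrections $-2M_1/r+M_3/(3r^3)+\cdots$, where $M_j$ are the moments of the earlier folds. But then the inner structure is not a lower-order term. Requiring $C_0\approx0$ also constrains the earlier configuration, so the persistence and independence argument of your induction no longer applies.

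\emph{Parameter count.} A new fold brings only a position and a slope jump. In the separated limit the position is absorbed by the scaling $(x,y,F)\mapsto(\lambda x,\lambda y,\lambda F(\cdot/\lambda))$, so a local unfolding has essentially one new parameter.

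\emph{Two smaller points.} First, $2+3(n-2)=3n-4$, so anchoring at $n=2$ would have been consistent. Second, orbits obtained only from sign changes still need an argument for hyperbolicity; the paper gets this from the perturbation $F+\varepsilon x$ together with Sard's theorem.

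\textbf{The missing idea} is that the extra cycles come from a \emph{coordinated} degeneracy of all folds at comparable amplitudes, not from a local contribution of each fold.

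\emph{Setup and finite-amplitude cycles.} The paper places the folds at $a_i=\sin((2i-1)\pi/(4n))$, alternately on the right and left. It takes $F=\eta G$ and imposes $M_0=-2$, $M_1=M_3=\cdots=M_{2n-3}=0$. These conditions give $\int_0^\infty r^{2j+3}\Phi(r)\,\dd r=0$ for $0\le j\le n-3$. That forces $n-2$ sign changes of $\Phi$ at finite amplitude, at no extra parameter cost.

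\emph{Cycles near infinity.} The same conditions make the return map at infinity highly degenerate. The paper uses the second-order coefficients $B_k$, which detect the asymmetry between $F_R$ and $F_L$, and the rank-$(2n-3)$ Jacobian of Lemma~\ref{lm:rank}. This produces a curve $\mathbf p^*(\eta)$ along which $\alpha_0=\cdots=\alpha_{2n-3}=0$, $\alpha_{2n-2}\sim b_n\eta^3$ and $\alpha_{2n-1}\sim d_n\eta$, with $b_nd_n<0$. That gives one cycle at $r\sim\eta^{-2}$. The paper then unfolds $2n-3$ further cycles at a second scale $r\sim1/\delta$, with $\delta\ll\eta^2$.

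\emph{The count.} The total $(n-2)+1+(2n-3)=3n-4$ uses all $2n$ parameters jointly. ``Three per fold'' is only the asymptotic slope of this global count, not something each fold contributes on its own.
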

From Theorem \ref{thm:main}, we obtain the following corollary.
\begin{corollary}
For \(n\geq 3\), Tonnelier's conjecture is incorrect.
\end{corollary}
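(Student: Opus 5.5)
The corollary follows directly from Theorem~\ref{thm:main}, and I will argue it that way. Tonnelier's conjecture, in the continuous case, says that if $F$ is continuous and piecewise linear on $n+1$ intervals, then system~\eqref{sys} has at most $n$ limit cycles. The first step is to match this hypothesis with the quantity $H_n$. A continuous piecewise linear $F$ on $n+1$ intervals whose adjacent pieces have distinct slopes has exactly $n$ switching values. By Definition~\ref{switching points}, each of these is a fold point, since continuity rules out jump points. If two adjacent pieces had equal slopes, they would merge into one piece and $F$ would have fewer fold points. So the family of functions covered by the conjecture contains every continuous piecewise linear $F$ with exactly $n$ fold points, and the conjecture would force $H_n\le n$.

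The second step is to compare this with Theorem~\ref{thm:main}. For $n\ge 3$ we have $3n-4\ge 5>0$, so $|3n-4|=3n-4$, and
\[
3n-4-n=2n-4\ge 2>0.
\]
Hence $H_n\ge 3n-4>n$, and in fact $H_n\ge n+2$.

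The one point that needs care is that $H_n$ is defined as a supremum, so I must produce an actual system, not just a supremum value. Because the number of limit cycles is an integer, $H_n\ge 3n-4$ means there is some admissible $F$ with exactly $n$ fold points for which system~\eqref{sys} has at least $n+1$ limit cycles. Such an $F$ is continuous and piecewise linear on exactly $n+1$ intervals, so it contradicts the conjecture's bound of $n$. This proves the corollary.

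For $n=1,2$, we get $|3n-4|=1\le n$ and $|3n-4|=2\le n$. Together with the results of \cite{LPV} and \cite{CFJM}, this shows that $n\ge3$ is exactly the range in which the conjecture fails.
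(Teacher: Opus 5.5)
Your proposal is correct and matches the paper, which obtains the corollary directly from Theorem~\ref{thm:main}. Every continuous piecewise linear $F$ with exactly $n$ fold points is linear on $n+1$ intervals, so it is covered by the conjecture. For $n\ge3$ the theorem gives $H_n\ge 3n-4\ge n+2>n$, so the conjecture fails. Your extra care about $H_n$ being a supremum, using integer-valuedness to get an actual system with at least $n+1$ limit cycles, is correct but not strictly needed, since the proof of Theorem~\ref{thm:main} constructs such an $F$ explicitly.
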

    
    \begin{theorem}\label{thm:upper}
	For every positive integer $n$, 
\[
H_n\le n+
		\left\lfloor\frac{(n+2)^2}{4}\right\rfloor
		Q(n)<2^{28(n+1)^2},
\]
    where 
    \[
Q(n)=2^{8(n+1)^2-2(n+1)}10^{4(n+1)}(48n+49)^{4(n+1)}.
    \]
\end{theorem}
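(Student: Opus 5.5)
Let the fold points be \(x_1<\dots<x_n\), so \(F\) is linear with slope \(k_j\) on each of the \(n+1\) intervals \(I_0=(-\infty,x_1],\dots,I_n=[x_n,\infty)\). The system has the unique equilibrium \((0,F(0))\), and every periodic orbit surrounds it. We classify a periodic orbit \(\Gamma\) by the pair \((a,b)\) with \(a\le b\): the leftmost interval \(I_a\) and the rightmost interval \(I_b\) that \(\Gamma\) visits. Since \(0\) lies in some fixed interval \(I_{j_0}\), we need \(a\le j_0\le b\). Counting pairs \(a\le j_0\le b\) with \(0\le a,b\le n\) gives at most \((j_0+1)(n-j_0+1)\le\lfloor (n+2)^2/4\rfloor\) classes.

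The term \(n\) accounts for cycles confined to a single zone or to degenerate configurations. For the remaining classes we bound the number of limit cycles in each class by \(Q(n)\).

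\textbf{Setting up the count in one class.} Fix a class \((a,b)\). We parametrize orbits by the point \((0,y_0)\) where they cross the positive \(y\)-axis, and build the displacement function \(d(y_0)\) as a composition of linear flows. Inside each zone \(I_j\) the flow is linear, \(\dot X=A_jX+c_j\), so the transit from \(x=x_j\) to \(x=x_{j+1}\) is given by \(e^{A_jt}\), with \(t\) the flight time. The flight times \(t_j\) and the crossing ordinates \(y_j\) on the switching lines are then linked by equations of the form
\[
(y_{j+1},\,x_{j+1}) = \Phi_j(t_j;\,y_j),
\]
whose entries involve \(e^{\lambda t}\), \(\cos(\omega t)\) and \(\sin(\omega t)\). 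Depending on the sign of \(k_j^2-4\), the eigenvalues are real, repeated or complex.

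A limit cycle in the class corresponds to an isolated solution of a system of at most \(2(n+1)\) such equations in the unknowns \((y_j,t_j)\). The free variables are the \(n+1\) flight times on each of the upper and lower arcs, about \(4(n+1)\) quantities in total, subject to the constraint \(x(t_j)=x_{j+1}\) at each crossing.

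\textbf{Bounding the solutions.} The plan is to bound the number of isolated solutions by Khovanskii's fewnomial theorem for Pfaffian functions. The functions \(e^{\lambda t}\) satisfy a Pfaffian chain of length 1. For \(\cos\) and \(\sin\) we pass to \(\tan(\omega t/2)\), which requires restricting each \(t\) to an interval of length less than \(\pi/\omega\). This restriction is justified because, in a focus region, an arc of the orbit between two switching lines turns through an angle less than \(\pi\).

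The resulting chain has length \(r=O(n)\) in \(N=O(n)\) variables, and the polynomial degrees are bounded by an absolute constant such as \(\le 48n+49\) after elimination. Khovanskii's bound
\[
2^{r(r-1)/2}\,\alpha\,(\alpha+\beta)^{r}\,\cdots
\]
then gives a count of the form \(2^{O(n^2)}\,C^{O(n)}\). This matches the shape of \(Q(n)\), with \(r\approx 4(n+1)\), which produces the exponent \(8(n+1)^2-2(n+1)=r(r-1)/2\).

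\textbf{Final inequality and main obstacle.} The last inequality \(H_n<2^{28(n+1)^2}\) is then a crude estimate: use \(10(48n+49)<2^{7(n+1)}\) together with \(\lfloor (n+2)^2/4\rfloor\le 2^{2(n+1)}\).

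I expect the main obstacle to be making the Pfaffian reduction legitimate. This means guaranteeing that limit cycles correspond to nondegenerate isolated roots, and in particular handling the elimination of the constraint equations so that the solution set is finite and each limit cycle is counted. It also means handling the restricted time domains across the different eigenvalue types, so that one uniform chain and degree bound applies. Finally, the extra \(n\) term must absorb the exceptional cycles that are tangent to the switching lines.
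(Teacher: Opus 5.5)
\textbf{Main gap: isolated versus nondegenerate zeros.} The most serious problem is in ``Bounding the solutions''. Khovanskii's theorem, in the Gabrielov--Vorobjov form that produces the $2^{\ell(\ell-1)/2}$ factor, bounds only \emph{nondegenerate} common zeros, i.e.\ those with nonsingular Jacobian. A limit cycle need not be hyperbolic, so its zero of the matching system may be isolated but degenerate. You cannot ``guarantee that limit cycles correspond to nondegenerate isolated roots'', and a direct Khovanskii count says nothing about such zeros.

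The missing idea is a reduction from isolated to nondegenerate zeros. Put $H=\sum_i f_i^2$. Each isolated zero is a strict minimum of $H$ on a small ball $B_j$. So for all sufficiently small $\varepsilon\in\R^d$, the function $H-\varepsilon\cdot\mathbf z$ still attains its minimum inside each $B_j$. Choosing $\varepsilon$ to be a regular value of $\nabla H$ (Sard), the square system $\nabla H=\varepsilon$ has at least as many nondegenerate solutions as there are isolated zeros.

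This step raises the degree to $D=2\beta+\alpha-1$. It is this $D=10$ (from $\alpha=3$, $\beta=4$) that produces both $10^{4(n+1)}$ and $48n+49=\alpha\min\{d,\ell\}+dD-d+1$ with $d=\ell=4(n+1)$ in $Q(n)$. So $48n+49$ is not a polynomial degree of the equations, as you suggest.

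You also still have to prove that a transverse limit cycle gives an isolated zero at all. Nearby zeros of the matching system must be shown to give orbit segments with the same itinerary (implicit function theorem for the passage times, using transversality). Their concatenations are then nearby periodic orbits, contradicting isolation of the cycle.

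\textbf{Supporting steps that are incomplete or wrong.}

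The summand $n$ does not come from ``cycles confined to a single zone''; a periodic orbit inside one affine zone is never isolated. It bounds orbits tangent to a switching line. Tangency to $x=x_i$ forces $\dot x=0$, i.e.\ passage through the single point $(x_i,F(x_i))$, so there is at most one such orbit per fold point.

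Your classes $(a,b)$ fix the equations only after you prove two facts: every periodic orbit meets the $y$-axis exactly twice, and $x$ has exactly one extremum on each half, so each switching line is crossed exactly twice. These follow from the Jordan curve theorem with $\dot x=y-F(0)$ on the axis, and from $\ddot x=-x$ at critical points, but the argument has to be given.

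The substitution $\tan(\omega t/2)$ breaks down exactly for a segment running from the $y$-axis back to the $y$-axis inside one focus zone. There $\omega\tau=\pi$, because consecutive zeros of $x$ are $\pi/\omega$ apart, and this case does occur. You also do not say how $e^{-at/2}$ becomes Pfaffian after the substitution. The paper instead uses $u=\tan((\omega\tau-\pi/2)/2)\in(-1,1]$ with the global chain $h=1/(1+u^2)$, $E=e^{-(a/\omega)\arctan u}$, which gives $\alpha=3$ and $\beta=4$.

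Finally, your closing estimate does not reach the stated bound. With $10(48n+49)<2^{7(n+1)}$, the factor $(10(48n+49))^{4(n+1)}$ alone can be as large as $2^{28(n+1)^2}$. Multiplying by $2^{8(n+1)^2-2(n+1)}$ and by $2^{2(n+1)}$ then gives only about $2^{36(n+1)^2}$. You need a polynomial estimate such as $10(48n+49)<512(n+1)$ together with $(n+1)^{4(n+1)}\le 2^{4n(n+1)}$. That yields $2^{12n^2+56n+43}<2^{28(n+1)^2}$.
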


	The remainder of this paper is organized as follows.
Section~\ref{sec2} develops the lower-bound construction
and proves Theorem~\ref{thm:main}.
Section~\ref{sec3} establishes the upper bound and proves
Theorem~\ref{thm:upper}.
The appendices provide the supporting calculations.

	\section{The lower bound}\label{sec2}
	We first develop the lower-bound construction for \(n\ge3\).
The cases \(n=1,2\) follow from known results,
as explained in the proof of Theorem~\ref{thm:main}.

Consider system \eqref{sys}, where \(F:\R\to\R\) is
continuous and piecewise linear with exactly \(n\) fold points.
By Definition~\ref{switching points}, it can be written as
\[
F(x)=\mu x+b+\sum_{i=1}^n\omega_i|x-x_i|,
\qquad x_1<\cdots<x_n,
\]
where \(\mu,b\in\R\) and \(\omega_i\ne0\) for every \(i\).

	For the construction below, choose
	\(0<a_1<\cdots<a_n\) and weights \(e_1,\ldots,e_n\in\R\).
	For \(x\ge0\), define
	\[
	F_R(x)
	=
	x+\frac12
	\sum_{\substack{1\le i\le n\\ i\text{ odd}}}
	e_i\bigl(x-a_i+|x-a_i|\bigr),
	\qquad
	F_L(x)
	=
	x+\frac12
	\sum_{\substack{1\le i\le n\\ i\text{ even}}}
	e_i\bigl(x-a_i+|x-a_i|\bigr),
	\]
	and set
	\[
	G(x)=
	\begin{cases}
		F_R(x),& \text{if } x\ge0,\\
		-F_L(-x),& \text{if } x<0.
	\end{cases}
	\]
    
	We consider system \eqref{sys} with \(F(x)=\eta G(x)\),
	where \(\eta>0\) is sufficiently small.
	Since \(G(x)=x\) for \(|x|<a_1\), we have \(F(0)=0\).
	If \(e_i\ne0\) for every \(i\), then \(F(x)\) has exactly
	\(n\) fold points, located at \(a_i\) for odd \(i\)
	and at \(-a_i\) for even \(i\).
	
	In the polar coordinates
	\(x=r\sin\theta,\ y=r\cos\theta\),
	system \eqref{sys} with \(F=\eta G\) becomes
	\begin{equation}\label{polar}
		\dot r=-\eta\sin\theta\,G(r\sin\theta),
		\qquad
		\dot\theta
		=1-\frac{\eta}{r}\cos\theta\,G(r\sin\theta).
	\end{equation}
	Since \(G\) is globally Lipschitz and \(G(0)=0\), there exists
	\(C>0\) such that \(|G(x)|\le C|x|\). Hence, for sufficiently small
	\(\eta>0\) and \(r>0\),
	\[
	\dot\theta
	=1-\frac{\eta}{r}\cos\theta\,G(r\sin\theta)
	\ge 1-\eta C>0.
	\]
	Thus every orbit starting from the positive \(y\)-axis rotates
	monotonically around the origin and intersects the positive \(y\)-axis
	again when \(\theta=2\pi\).
	Let \(r(\theta;r_0,\eta)\) denote its radial coordinate, with
	\(r(0;r_0,\eta)=r_0>0\).
		Define the return map and its displacement by
	\[
	P_\eta(r_0)=r(2\pi;r_0,\eta),
	\qquad
	\Delta_\eta(r_0)=P_\eta(r_0)-r_0.
	\]
	The following lemma gives the first-order expansion
of \(\Delta_\eta\).
	\begin{lemma}\label{lm:average}
		For system \eqref{sys} with \(F(x)=\eta G(x)\) defined above,
		the displacement function \(\Delta_\eta\) satisfies
		\begin{equation}\label{avg}
			\Delta_\eta(r)=-\eta r\Phi(r)+O(\eta^2),
		\end{equation}
		as \(\eta\to0^+\), uniformly for \(r\) in compact subsets of
	\((0,\infty)\), where
		\begin{equation}\label{Phi}
			\Phi(r)=\pi+\sum_{i=1}^n e_iK(a_i/r),
			\qquad
			K(z)=
			\begin{cases}
				\arccos z-z\sqrt{1-z^2},& \text{if } 0\le z<1,\\
				0,& \text{if } z\ge1.
			\end{cases}
		\end{equation}
	\end{lemma}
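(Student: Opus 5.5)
The plan is a first-order averaging computation carried out in the polar form \eqref{polar}. By the estimate \(\dot\theta\ge 1-\eta C>0\) established above, \(\theta\) can serve as the independent variable. This gives the scalar non-autonomous equation
\[
\frac{\dd r}{\dd\theta}=\frac{-\eta\sin\theta\,G(r\sin\theta)}{1-\frac{\eta}{r}\cos\theta\,G(r\sin\theta)}
=-\eta\sin\theta\,G(r\sin\theta)+R(\theta,r,\eta).
\]
Using \(|G(x)|\le C|x|\), the remainder satisfies \(|R|\le \eta^2 C^2 r/(1-\eta C)\). So for \(r\) in a compact interval \([\rho_1,\rho_2]\subset(0,\infty)\) we have \(R=O(\eta^2)\) uniformly.

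Next, \(|\dd r/\dd\theta|\le \eta C r(1+O(\eta))\). By Gronwall, \(r(\theta;r_0,\eta)\) stays in a slightly larger compact interval and satisfies \(|r(\theta;r_0,\eta)-r_0|=O(\eta)\) uniformly for \(\theta\in[0,2\pi]\) and \(r_0\in[\rho_1,\rho_2]\). Integrating from \(0\) to \(2\pi\), I then replace \(r(\theta)\) by \(r_0\) inside \(G(r\sin\theta)\). Since \(G\) is globally Lipschitz, this costs at most \(\eta\cdot L\cdot O(\eta)=O(\eta^2)\), where \(L\) is the Lipschitz constant of \(G\). Hence
\[
\Delta_\eta(r_0)=-\eta\int_0^{2\pi}\sin\theta\,G(r_0\sin\theta)\dd\theta+O(\eta^2),
\]
uniformly on compacts. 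This uniformity is the only point needing care, and it is the main (mild) obstacle. It is handled entirely by the Lipschitz property, so no smoothness of \(G\) across the fold points is required.

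It remains to evaluate the integral, writing \(r=r_0\). On \([0,\pi]\) we have \(r\sin\theta\ge0\), so \(G=F_R\) there. On \([\pi,2\pi]\), put \(s=-\sin\theta\ge0\); then \(\sin\theta\,G(r\sin\theta)=(-s)(-F_L(rs))=sF_L(rs)\), and the substitution \(\theta\mapsto\theta-\pi\) turns this piece into \(\int_0^\pi\sin\theta\,F_L(r\sin\theta)\dd\theta\).

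The linear part \(x\) of \(F_R\) and of \(F_L\) contributes \(r\int_0^\pi\sin^2\theta\dd\theta=r\pi/2\) each, so \(r\pi\) in total. Each ramp term \(\tfrac12(x-a_i+|x-a_i|)=(x-a_i)_+\) appears exactly once: in \(F_R\) if \(i\) is odd, in \(F_L\) if \(i\) is even. Its contribution is \(e_i\int_0^\pi\sin\theta\,(r\sin\theta-a_i)_+\dd\theta\).

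Set \(z=a_i/r\). If \(z\ge1\) this integral vanishes. If \(0\le z<1\), the integrand is supported on \([\theta_0,\pi-\theta_0]\) with \(\theta_0=\arcsin z\), and by symmetry
\[
2r\int_{\theta_0}^{\pi/2}(\sin^2\theta-z\sin\theta)\dd\theta
=r\Bigl[\bigl(\tfrac{\pi}{2}-\theta_0\bigr)+z\sqrt{1-z^2}-2z\sqrt{1-z^2}\Bigr]
=r\bigl(\arccos z-z\sqrt{1-z^2}\bigr)=rK(z).
\]
Summing all contributions gives \(\int_0^{2\pi}\sin\theta\,G(r\sin\theta)\dd\theta=r\Phi(r)\). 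This yields \eqref{avg} with \(\Phi\) as in \eqref{Phi}.
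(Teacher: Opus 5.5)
Your proposal is correct and follows essentially the same route as the paper. You pass to \(\theta\) as the independent variable, expand \(\dd r/\dd\theta\) to first order in \(\eta\), and evaluate \(\int_0^{2\pi}\sin\theta\,G(r\sin\theta)\,\dd\theta\) using a half-turn substitution for the even-indexed ramps. The only difference is that you prove the \(O(\eta^2)\) first-order estimate directly, via Gronwall and the Lipschitz bound on \(G\), instead of invoking the cited averaging theorem; this has the small benefit of making the uniformity on compact \(r\)-sets fully explicit.
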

\begin{proof}
	We first derive the first-order averaged displacement and then compute the
	corresponding averaged integral. Since \(\dot\theta>0\), we may use
	\(\theta\) as the independent variable. From the polar form 
	\eqref{polar},
	\[
	\frac{\dd r}{\dd\theta}
	=
	-\frac{\eta\sin\theta\,G(r\sin\theta)}
	{1-\frac{\eta}{r}\cos\theta\,G(r\sin\theta)}
	=
	-\eta\sin\theta\,G(r\sin\theta)+O(\eta^2),
	\]
	uniformly for \(r\) in compact subsets of \((0,\infty)\).
	The function \(-\sin\theta\,G(r\sin\theta)\) is
	\(2\pi\)-periodic in \(\theta\) and globally Lipschitz in \(r\).
	Thus, the first-order averaging formula
	\cite[Theorem~1.1]{BuicaLlibre} gives
	\begin{equation}\label{first-average}
		P_\eta(r)-r
		=-\eta\int_0^{2\pi}\sin\theta\,G(r\sin\theta)\,\dd\theta
		+O(\eta^2).
	\end{equation}
	
	It remains to compute
	\(-\int_0^{2\pi}\sin\theta\,G(r\sin\theta)\dd\theta\).
	The linear term of \(G\) contributes
	\(-\int_0^{2\pi}r\sin^2\theta\,\dd\theta=-\pi r\).
	For odd \(i\), the term involving \(e_i\) vanishes when
	\(r\le a_i\). If \(r>a_i\), set \(z=a_i/r\) and
	\(\alpha=\arcsin z\). Its contribution is
	\[
	-e_i\int_\alpha^{\pi-\alpha}
	\sin\theta\,(r\sin\theta-a_i)\dd\theta
	=-e_i r\left(\arccos z-z\sqrt{1-z^2}\right)
	=-e_i rK(z).
	\]
	For even \(i\), the substitution \(\theta\mapsto\theta+\pi\)
	gives the same contribution. Therefore,
	\[
	-\int_0^{2\pi}\sin\theta\,G(r\sin\theta)\,\dd\theta
	=-r\left(\pi+\sum_{i=1}^n e_iK(a_i/r)\right)
	=-r\Phi(r).
	\]
	Substituting this identity into \eqref{first-average}
	gives \eqref{avg} and proves the lemma.
\end{proof}

	For each integer \(j\geq0\), let
	\begin{equation}\label{Mj}
		M_j=\sum_{i=1}^n e_i a_i^j.
	\end{equation}
	The following conditions on \(M_j\) control the sign changes of \(\Phi\).
	
	\begin{lemma}\label{lm:finite}
		If
		\begin{equation}\label{moment-condition}
			M_0=-2,\qquad M_1=M_3=\cdots=M_{2n-3}=0,
		\end{equation}
		then \(\Phi\) has at least \(n-2\) positive sign-changing zeros.
	\end{lemma}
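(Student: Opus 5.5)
The plan is to show that, under \eqref{moment-condition}, \(\Phi\) is orthogonal on \((0,\infty)\) to every polynomial in \(r^2\) of degree at most \(n-2\). A Chebyshev-system (Descartes) argument then produces the sign changes. In fact this route should give \(n-1\) sign changes, which is more than the \(n-2\) claimed.

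\emph{Step 1: behaviour at the ends.} For \(0<r<a_1\) every \(K(a_i/r)\) vanishes, so \(\Phi(r)=\pi\). For \(r>a_n\) all \(a_i/r<1\), and I will use the expansion of \(K\) at \(z=0\). Since
\[
K'(z)=-\tfrac{1}{\sqrt{1-z^2}}-\sqrt{1-z^2}+\tfrac{z^2}{\sqrt{1-z^2}}=-2\sqrt{1-z^2},
\]
we get \(K(z)=\tfrac{\pi}{2}+\sum_{m\ge0}c_{2m+1}z^{2m+1}\) with \(c_1=-2\) and all \(c_{2m+1}\ne0\) (they are \(-2\) times the binomial-series coefficients of \(\sqrt{1-z^2}\), integrated). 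Hence for \(r>a_n\),
\[
\Phi(r)=\pi+\tfrac{\pi}{2}M_0+\sum_{m\ge0}c_{2m+1}M_{2m+1}r^{-(2m+1)}.
\]
By \eqref{moment-condition} this reduces to \(\Phi(r)=O(r^{-(2n-1)})\) as \(r\to\infty\). In particular \(\int_0^\infty \Phi(r)r^{2k}\dd r\) converges for \(0\le k\le n-2\).

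\emph{Step 2: vanishing of the moments of \(\Phi\).} I claim \(\int_0^\infty\Phi(r)r^{2k}\dd r=0\) for \(k=0,\dots,n-2\). I will compute \(\int_0^R\) for large \(R\) term by term. The constant \(\pi\) contributes \(\pi R^{2k+1}/(2k+1)\). Each term \(e_iK(a_i/r)\) contributes \(e_ia_i^{2k+1}\int_{1}^{R/a_i}K(1/u)u^{2k}\dd u\), into which I substitute the expansion of \(K(1/u)\) at large \(u\). The term \(\tfrac{\pi}{2}u^{2k}\) gives \(\tfrac{\pi}{2}e_iR^{2k+1}/(2k+1)\) plus an \(a_i^{2k+1}\)-term. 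A term \(c_{2m+1}u^{2k-2m-1}\) with \(m<k\) gives \(R^{2(k-m)}\) times \(e_ia_i^{2m+1}\), plus an \(a_i^{2k+1}\)-term. The term with \(m=k\) gives \(c_{2k+1}e_ia_i^{2k+1}\log(R/a_i)\). The tail \(m>k\) gives an \(a_i^{2k+1}\)-constant plus \(o(1)\). Summing over \(i\) and using \(M_0=-2\), \(M_1=\dots=M_{2k+1}=0\) (valid since \(2k+1\le 2n-3\)):
\begin{itemize}
\item the \(R^{2k+1}\) terms cancel;
\item every \(R^{2(k-m)}\) term has coefficient \(M_{2m+1}=0\);
\item the \(\log R\) term has coefficient \(c_{2k+1}M_{2k+1}=0\);
\item the \(\log a_i\) and all remaining constants are of the form \(\mathrm{const}\cdot\sum_i e_ia_i^{2k+1}(\alpha+\beta\log a_i)\).
\end{itemize}

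The \(\log a_i\) piece is the one dangerous point. To avoid it I will organize the computation via the Mellin transform instead. Let \(I(s)=\int_0^\infty\Phi'(r)r^{s}\dd r\), where \(\Phi'(r)=\frac{2}{r^3}\sum e_ia_i\sqrt{(r^2-a_i^2)_+}\). For \(\operatorname{Re}s<1\) it equals \(C(s)\sum_ie_ia_i^{s}\), with \(C(s)=2\int_1^\infty\sqrt{u^2-1}\,u^{s-3}\dd u\) a Beta-function expression. The left side is analytic for \(\operatorname{Re}s<2n-1\) by Step 1, so the identity continues meromorphically. At odd \(s=2k+1\), \(C\) has at most a simple pole, which is cancelled by \(M_{2k+1}=0\). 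The remaining derivative-type term \(\sum e_ia_i^{s}\log a_i\) is the delicate point. I expect this residue/limit bookkeeping, namely verifying that the finite part really vanishes, to be the main obstacle. If it does not vanish, I will fall back on orthogonality only to the even powers \(r^{2k}\) with \(k\le n-3\), together with the fixed endpoint signs; that still yields \(n-2\) sign changes. Integration by parts then passes from \(\Phi'\) against \(r^{2k+1}\) to \(\Phi\) against \(r^{2k}\). The boundary terms vanish because \(\Phi r^{2k+1}\to0\) at infinity (as \(2k+1<2n-1\)) and at \(0\).

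\emph{Step 3: Chebyshev argument.} Suppose \(\Phi\) had at most \(N-1\) sign changes on \((0,\infty)\), at points \(0<\rho_1<\dots<\rho_{N-1}\). Here \(N\) is the number of orthogonality relations secured in Step 2. The polynomial \(p(r)=\prod_j(r^2-\rho_j^2)\) lies in the span of \(1,r^2,\dots,r^{2(N-1)}\) and changes sign exactly at the \(\rho_j\) on \((0,\infty)\). So, up to sign, \(\Phi p\ge0\) and it is not identically zero, because \(\Phi=\pi\) on \((0,a_1)\). This gives \(\int_0^\infty\Phi p\,\dd r\ne0\), contradicting orthogonality. Hence \(\Phi\) has at least \(N\ge n-2\) sign-changing positive zeros, which proves Lemma~\ref{lm:finite}.
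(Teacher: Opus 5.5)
\textbf{Step~2 fails.} The even-power moments of $\Phi$ are not forced to vanish by \eqref{moment-condition}, and your own term-by-term bookkeeping already gives their exact value. The $R^{2k+1}$, $R^{2(k-m)}$ and $\log R$ terms cancel, and all constants of the form $\alpha\sum_i e_ia_i^{2k+1}$ drop out by $M_{2k+1}=0$. The $m=k$ term, however, leaves
\[
\int_0^\infty \Phi(r)\,r^{2k}\dd r=-c_{2k+1}\sum_{i=1}^n e_ia_i^{2k+1}\log a_i ,
\]
with your coefficient $c_{2k+1}\neq0$, and nothing in the hypothesis kills the right-hand side.

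A concrete check: for $k=0$ this reads $\int_0^\infty\Phi\dd r=2\sum_i e_ia_i\log a_i$. At the parameters \eqref{trig-base} with $n=3$ one has $e_1a_1=e_3a_3=-\cos(\pi/12)$, $e_2a_2=2\cos(\pi/12)$, $a_1a_3=\tfrac14$ and $a_2^2=\tfrac12$. So the integral equals $2\cos(\pi/12)\log 2\approx1.34\neq0$.

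The Mellin reformulation cannot change the value of a convergent integral. Indeed $\int_0^\infty\Phi'(r)r^s\dd r=C(s)\sum_ie_ia_i^s$ with $C(s)=\Gamma(\tfrac32)\Gamma(\tfrac{1-s}{2})/\Gamma(\tfrac{4-s}{2})$. At odd $s=2k+1$, the simple pole of $C$ against the simple zero of $\sum_ie_ia_i^s$ leaves exactly a nonzero multiple of $\sum_ie_ia_i^{2k+1}\log a_i$.

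Your fallback keeps the same even weights, only fewer of them, so it fails for the same reason. For $n=3$ it would require precisely $\int_0^\infty\Phi\dd r=0$. (Moreover, the lemma's hypothesis leaves $M_{2n-1}$ free, so the sign of $\Phi$ near infinity is not fixed.) The announced $n-1$ sign changes is therefore unsupported as well.

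\textbf{The missing idea is the choice of weights.} The paper tests $\Phi$ against the odd powers $r^{2j+3}$, $j=0,\ldots,n-3$. After integrating by parts, $\Phi'$ is paired with $r^{2j+4}$, which leads to
\[
\int_{a_i}^R r^{2j+1}\sqrt{r^2-a_i^2}\dd r=\sum_{k=0}^{j}\binom{j}{k}a_i^{2(j-k)}\frac{(R^2-a_i^2)^{k+3/2}}{2k+3}.
\]
This is an algebraic expression whose expansion in $R$ contains only the odd powers $R^{2j+3-2\ell}$, with coefficients proportional to $a_i^{2\ell}$. There is no constant term and no logarithm.

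After multiplying by $e_ia_i$ and summing, the nonnegative powers carry $M_1,\ldots,M_{2j+3}$, which all vanish. The boundary term $R^{2j+4}\Phi(R)$ is $O(R^{-1})$. In Mellin language, this is the fact that $C(s)=0$ at even $s\ge4$.

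The weight $r$ does not work either, because $\int_0^\infty r\Phi\dd r=\tfrac{\pi}{2}M_2$. So one gets exactly $n-2$ orthogonality relations, matching the lemma. Your Step~3 then goes through verbatim with $p$ replaced by $r^3\prod_j(r^2-\rho_j^2)$.
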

\begin{proof}
	We first show that \eqref{moment-condition} implies
	\begin{equation}\label{orthogonality}
		\int_0^\infty r^{2j+3}\Phi(r)\dd r=0,
		\qquad j=0,\ldots,n-3,
	\end{equation}
	and then use \eqref{orthogonality} to control the number of positive
	sign-changing zeros of \(\Phi\).
	
	Indeed, differentiating \eqref{Phi} gives
	\begin{equation}\label{Phi-prime}
		\Phi'(r)
		=\frac{2}{r^3}
		\sum_{i=1}^n e_i a_i
		\sqrt{\frac{r^2-a_i^2+|r^2-a_i^2|}{2}}.
	\end{equation}
	Hence, by \eqref{Phi-prime} and integration by parts, for \(R>a_n\),
	\begin{equation}\label{IBP-Phi}
		\int_0^R r^{2j+3}\Phi(r)\dd r
		=
		\frac{R^{2j+4}\Phi(R)}{2j+4}
		-\frac{2}{2j+4}\sum_{i=1}^n e_i a_iJ_j(a_i,R),
	\end{equation}
	where
	\[
	J_j(a,R)
	=
	\int_a^R r^{2j+1}\sqrt{r^2-a^2}\dd r
	=
	\sum_{k=0}^j
	\binom{j}{k}a^{2(j-k)}
	\frac{(R^2-a^2)^{k+3/2}}{2k+3}.
	\]
	Every term involving a nonnegative power of \(R\) in the expansion of \(J_j(a,R)\) is of the form
	\(C_{j\ell}a^{2\ell}R^{2j+3-2\ell}\), with \(0\le\ell\le j+1\).
	Therefore, the terms involving nonnegative powers of \(R\) in
	\(\sum_{i=1}^n e_i a_iJ_j(a_i,R)\) are linear combinations of
	\(M_{2\ell+1}R^{2j+3-2\ell}\), \(0\le\ell\le j+1\), and hence vanish
	because \(M_1=M_3=\cdots=M_{2j+3}=0\). Consequently,
	\[
	\sum_{i=1}^n e_i a_iJ_j(a_i,R)=O(R^{-1})
	\qquad \text{as} \quad R\to\infty.
	\]
	
	On the other hand, under \eqref{moment-condition}, the expansion of \(K\)
	at the origin gives \(\Phi(R)=O(R^{-(2n-1)})\). Since \(j\le n-3\),
	\[
	R^{2j+4}\Phi(R)
	=O\!\left(R^{2j-2n+5}\right)
	=O(R^{-1}).
	\]
	Thus \eqref{IBP-Phi} gives
	\(\int_0^R r^{2j+3}\Phi(r)\dd r=O(R^{-1})\), and letting
	\(R\to\infty\) proves \eqref{orthogonality}.
	
	Second, we use \eqref{orthogonality} to obtain a lower bound for the number
	of sign changes of \(\Phi\). Suppose that \(\Phi\) has only \(k \le n-3\) positive sign-changing zeros, denoted by \(r_1 < \cdots < r_k\). Define
	\[
	\Psi(r)=r^3\Phi(r)\prod_{\ell=1}^k(r^2-r_\ell^2).
	\]
	Since each \(r_\ell\) is a sign-changing zero of \(\Phi\), both \(\Phi(r)\)
	and \(r^2-r_\ell^2\) change sign when \(r\) crosses \(r_\ell\). Hence
	these sign changes cancel, and \(\Psi\) has a constant sign on
	\((0,\infty)\). Moreover, since \(K(a_i/r)=0\) for \(0<r<a_1\), we have
	\(\Phi(r)=\pi\) on this interval, and hence \(\Psi\not\equiv0\). Since
	\(k\le n-3\) and \(\Phi(r)=O(r^{-(2n-1)})\), the function \(\Psi\) is
	integrable on \((0,\infty)\). Therefore,
	\[
	\int_0^\infty \Psi(r)\dd r\ne0.
	\]
	In addition, expanding the product shows that this integral is a
	linear combination of \(\int_0^\infty r^{2j+3}\Phi(r)\dd r\),
	\(j=0,\ldots,k\), all of which vanish by \eqref{orthogonality}. This is a
	contradiction. Thus \(\Phi\) has at least \(n-2\) positive sign-changing
	zeros. This proves Lemma~\ref{lm:finite}.
\end{proof}

	We shall use the following parameter values in our construction:
	\begin{equation}\label{trig-base}
		a_i=\sin\left(\frac{(2i-1)\pi}{4n}\right),
		\qquad
		e_i=4(-1)^i\cos\left(\frac{\pi}{4n}\right)
		\cos\left(\frac{(2i-1)\pi}{4n}\right),
		\qquad 1\le i\le n.
	\end{equation}
	We further introduce the following quantities associated with the return map at infinity. Define
	\[
	Q_k=\int_0^{a_n}x^{2k+1}
	\left(F_R(x)^2-F_L(x)^2\right)\dd x,
	\qquad
	N_{m\ell}
	=
	\int_0^{a_n}x^{2(m-\ell)-1}
	\left(I_{\ell,R}(x)^2-I_{\ell,L}(x)^2\right)\dd x,
	\]
	where
	\[
	I_{\ell,R}(x)=\int_0^x t^{2\ell+1}F_R(t)\dd t,
	\qquad
	I_{\ell,L}(x)=\int_0^x t^{2\ell+1}F_L(t)\dd t.
	\]
	Set
	\begin{equation}\label{Bk}
		B_k=-Q_k+
		\sum_{\substack{m+\ell=k-1\\m>\ell}}
		2(m-\ell)^2c_mc_\ell N_{m\ell},
	\end{equation}
	where \(c_j=\binom{2j}{j}/4^j\) for \(j\ge0\)
	and an empty sum is understood to be zero.
	
	The following lemma establishes the degeneracy conditions satisfied by
	\(M_j\) and \(B_k\) at the parameter values given in \eqref{trig-base}.
	
	\begin{lemma}
		\label{lm:degeneracy}
		At the parameter values given in \eqref{trig-base},
		\[
		M_0=-2,\qquad M_1=M_3=\cdots=M_{2n-3}=0.
		\]
		Moreover, \(B_{n-3}\) is the first nonzero member of the sequence
		\(\{B_k\}_{k\ge0}\), and
\begin{equation}\label{sgn-critical-B}
\sgn B_{n-3}=(-1)^n.
\end{equation}
	\end{lemma}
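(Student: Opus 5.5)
The plan has two parts: first the moment identities by a discrete trigonometric orthogonality computation, then the reduction of the $B_k$ to moments. Set $\theta_i=(2i-1)\pi/(4n)$ and $\kappa=4\cos(\pi/(4n))$. Then
\[
M_j=\kappa\sum_{i=1}^n(-1)^i\cos\theta_i\sin^j\theta_i .
\]

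\textbf{The moment identities.} For odd $j=2m+1$, I will expand $\sin^{2m+1}\theta$ as a combination of $\sin((2k+1)\theta)$ with $k\le m$. The product-to-sum formula then gives
\[
\cos\theta\,\sin((2k+1)\theta)=\tfrac12\bigl(\sin((2k+2)\theta)+\sin(2k\theta)\bigr).
\]
So every $M_{2m+1}$ is a linear combination of the sums
\[
S_p=\sum_{i=1}^n(-1)^i\sin(2p\theta_i),\qquad 0\le p\le m+1 .
\]
I will evaluate $S_p$, together with the analogous cosine sums, as the imaginary (resp. real) part of the geometric sum
\[
\sum_{i=1}^n(-1)^ie^{\mathrm{i}p(2i-1)\pi/(2n)} .
\]
The aim is to show that $S_p=0$ for $1\le p\le n-1$, and to get an explicit nonzero value at $p=n$. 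This uses the ratio $q=-e^{\mathrm{i}p\pi/n}$ and the symmetry $\theta_{n+1-i}=\pi/2-\theta_i$.

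Since $2m+1\le 2n-3$ forces $p\le n-1$, this yields $M_1=M_3=\cdots=M_{2n-3}=0$. The same geometric sum with frequency $1$ gives
\[
\sum_i(-1)^i\cos\theta_i=-\frac{1}{2\cos(\pi/(4n))},
\]
and hence $M_0=-2$. As a by-product, I will record the first surviving odd moment. Only the top harmonic contributes, since $\sin^{2n-1}\theta$ has leading term $(-1)^{n-1}2^{-(2n-2)}\sin((2n-1)\theta)$. This gives an explicit $M_{2n-1}\ne0$ with sign $(-1)^{n-1}$ up to a computable positive factor.

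\textbf{Reducing the $B_k$.} For $x\ge0$, $F_R$ and $F_L$ are explicit piecewise linear functions. I will write
\[
F_R^2-F_L^2=(F_R-F_L)(F_R+F_L).
\]
Here $F_R-F_L=\tfrac12\sum_i(-1)^{i+1}e_i(x-a_i+|x-a_i|)$, which is supported on $[a_1,\infty)$. Integrating term by term against $x^{2k+1}$ on $[0,a_n]$ expresses $Q_k$ as a polynomial in the $a_i,e_i$. Doing the same for the iterated integrals $I_{\ell,R},I_{\ell,L}$ expresses $N_{m\ell}$ likewise.

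The key step is to reorganize $B_k$ so that, modulo terms that vanish under the moment conditions, it becomes a linear combination of the odd moments $M_{2s+1}$ and of products of moments. I expect the relevant identity to be the one hidden in the definition of $B_k$, through the coefficients $c_j$ and $2(m-\ell)^2$. These are exactly what arises when the return map at infinity is compared with the averaged function $\Phi$ at large $r$, whose expansion coefficients are multiples of the $M_{2s+1}$.

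So I would first try to prove a structural formula: $B_k$ equals a nonzero constant multiple of $M_{2k+3}$ plus terms involving only lower moments and cross terms $M_{2s+1}M_{2t+1}$ with $s,t<k$. If the cross terms do not cancel, I will instead directly check that every term of $B_k$ contains a factor $M_{2s+1}$ with $2s+1\le 2n-3$ whenever $k<n-3$.

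\textbf{Conclusion and main obstacle.} Given such a formula, $B_k=0$ for $k<n-3$ follows from the moment identities. For $k=n-3$, $B_{n-3}$ reduces to a positive multiple of $-M_{2n-3}$ plus a remainder. Since $M_{2n-3}=0$, this remainder must be isolated explicitly, and it is where the truly piecewise (non-moment) part of $Q_k$ enters. I expect it to reduce to a sum over $i$ of $e_i$ times a polynomial in $a_i$ that is again computable by the same trigonometric sums. Its sign should come out as $(-1)^n$, consistent with the sign of the first nonvanishing harmonic.

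The main obstacle is this structural reduction of $B_k$. The $Q_k$ and $N_{m\ell}$ involve products of the piecewise functions, so they are quadratic in the $e_i$ with kinks at all the $a_i$. Showing that all lower-order contributions collapse to vanishing moments requires careful bookkeeping. If a clean identity fails, the fallback is to verify the vanishing and the sign of $B_{n-3}$ by symbolic evaluation at the explicit trigonometric parameters, as in the appendices.
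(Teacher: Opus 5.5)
Your moment computation is correct and is essentially the paper's: $M_{2m+1}$ reduces to the alternating sums $\sum_i(-1)^i\sin(2p\theta_i)$, which vanish for $1\le p\le n-1$. There is one sign slip. Since $\sum_i(-1)^i\sin(2n\theta_i)=-n$, the first surviving moment is $M_{2n-1}=(-1)^n n\,2^{3-2n}\cos(\pi/(4n))$, of sign $(-1)^n$, not $(-1)^{n-1}$. This is not cosmetic: for $n=3,4$ the sum in \eqref{Bk} is empty, and $B_{n-3}=-Q_{n-3}$ is a positive multiple of $M_{2n-1}$.

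The real gap is the part on $B_k$, because the structural formula you aim for cannot hold.
\begin{itemize}
\item $B_k$ is an $\eta^2$-coefficient, so first-order averaging (and hence $\Phi$ and the moments) does not see it.
\item Already $Q_k=\int_0^{a_n}x^{2k+1}SD\,\dd x$, with $S=F_R+F_L$ and $D=\sum_i(-1)^{i-1}e_i(x-a_i)_+$, involves the signed sums $\sum_i(-1)^{i-1}e_ia_i^p$. It also involves order-dependent terms $e_ie_j\int_{\max(a_i,a_j)}^{a_n}(\cdots)\,\dd x$.
\item More decisively, Lemma~\ref{lm:rank} shows that $B_0,\dots,B_{n-4}$ have nonzero (indeed independent) differentials along the moment manifold $\mathcal M$. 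So no identity expressing $B_k$, $k\le n-4$, through $M_0,M_1,M_3,\dots,M_{2n-3}$ and their products can exist. Your fallback (``every term contains a vanishing moment'') fails for the same reason.
\item Your guessed form would also force $B_{n-3}=0$. You then posit a remainder at $k=n-3$, but nothing explains why the analogous remainder vanishes for $k<n-3$.
\end{itemize}

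The vanishing is a property of the particular point \eqref{trig-base}, and the missing idea is to exploit its equispaced structure.
\begin{itemize}
\item Extend $S$ oddly and $D$ evenly to $[-a_n,a_n]$. Split into the intervals $I_j=[\sin(j\vartheta-h),\sin(j\vartheta+h)]$, $-n\le j\le n-1$, where $\vartheta=\pi/(2n)$ and $h=\pi/(4n)$.
\item On each $I_j$, $S$ and $D$ are affine with coefficients given by \eqref{SDj} and symmetry. Each local contribution to $Q_k$ and $N_{m\ell}$ is then $(-1)^j$ times a $\pi$-periodic trigonometric polynomial in $j\vartheta$ of degree at most $2k+6$.
\item The identity $\sum_{j=-n}^{n-1}(-1)^je^{2irj\vartheta}=0$ for $|r|<n$ kills everything when $k\le n-4$. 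At $k=n-3$ only the frequencies $\pm2n$ survive.
\end{itemize}

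Your sign argument is also missing its hard half. For $n\ge5$, the terms $N_{m\ell}$ with $m+\ell=n-4$ enter $B_{n-3}$ alongside $-Q_{n-3}$, and you must rule out cancellation. The paper does this by proving $\sgn N_{m\ell}=(-1)^n$. It shows that the top Fourier coefficient $\mathcal I$ is positive, using the boundary value problems \eqref{U-BVP}--\eqref{Y-BVP}, Green-kernel positivity and explicit trigonometric inequalities. ``Its sign should come out as $(-1)^n$'' is no substitute for this. Symbolic evaluation at the explicit parameters can only cover finitely many $n$.
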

	\begin{proof}
	We first verify the stated identities for \(M_j\) at the parameter values
	\eqref{trig-base}, then prove that \(B_0=\cdots=B_{n-4}=0\), and finally
	determine the sign of \(B_{n-3}\).
	
	For convenience, set
\[
\vartheta=\frac{\pi}{2n},
\qquad
h=\frac{\pi}{4n},
\qquad
s=\sin\frac{\pi}{4n},
\qquad
q=\cos\frac{\pi}{2n},
\qquad
t_i=\frac{(2i-1)\pi}{4n}.
\]
	By \eqref{trig-base}, we have
	\(a_i=\sin t_i\), \(e_i=4(-1)^i\cos h\cos t_i\).
	Hence, by \eqref{Mj},
	\[
	M_0=\sum_{i=1}^n e_i
	=4\cos h\sum_{i=1}^n(-1)^i\cos t_i=-2,
	\]
	and for \(j\ge1\),
	\[
	M_{2j-1}
	=4\cos h\sum_{i=1}^n(-1)^i\sin^{2j-1}t_i\cos t_i.
	\]
	
	By the double-angle formulas, the function
	\(\sin^{2j-1}t\cos t\) lies in
	\(\operatorname{span}\{\sin(2t),\sin(4t),\ldots,\sin(2jt)\}\).
	Hence the identity
	\[
	\sum_{i=1}^n(-1)^i\sin(2\ell t_i)
	=
	\begin{cases}
		0, & \text{if } 1\le \ell<n,\\[1mm]
		-n, &  \text{if } \ell=n,
	\end{cases}
	\]
	implies
	\[
	M_1=M_3=\cdots=M_{2n-3}=0.
	\]
	At the critical order \(j=n\), the
	coefficient of \(\sin(2nt)\) in \(\sin^{2n-1}t\cos t\) is
	\((-1)^{n+1}2^{1-2n}\), and hence
	\begin{equation}\label{critical-M}
		M_{2n-1}=(-1)^n n\,2^{3-2n}\cos h.
	\end{equation}
	
	Define
	\[
	S(x)=F_R(x)+F_L(x),\qquad
	D(x)=F_R(x)-F_L(x).
	\]
	We next derive their interval expressions used in the calculation of \(B_k\).
A direct calculation shows that for \(a_j<x<a_{j+1}\),
\begin{equation}\label{SDj}
	S(x)=2(-1)^j\cos(j\vartheta)
	\left(x-\frac{\cos h\,\sin(j\vartheta)}{q}\right),
	\qquad
	D(x)=\frac{\sin^2(j\vartheta)-2\cos h\,\sin(j\vartheta)x}{s}.
\end{equation}
	Extend \(S\) and \(D\) from
	\([0,a_n]\) to \([-a_n,a_n]\) by
	\[
	S(-x)=-S(x),\qquad D(-x)=D(x),
	\qquad 0\le x\le a_n.
	\] 
	We partition \([-a_n,a_n]\) into the intervals
	\[
	I_j=
	\begin{cases}
		[-a_{1-j},-a_{-j}], & \text{if } -n+1\le j\le -1,\\[1mm]
		[-a_1,a_1], & \text{if }  j=0,\\[1mm]
		[a_j,a_{j+1}], & \text{if } 1\le j\le n-1.
	\end{cases}
	\]
	Since \(a_j=\sin(j\vartheta-h)\), these intervals can be written uniformly as
	\[
	I_j=
	[\sin(j\vartheta-h),\,\sin(j\vartheta+h)],
	\qquad j=-n+1,\ldots,n-1.
	\]
	For convenience, we also set
	\(I_{-n}=\{-a_n\}\), a degenerate interval of zero length. For the interval \(I_j\), set
\[
Q_{k,j}=\int_{I_j}x^{2k+1}S(x)D(x)\,\dd x,
\qquad
N_{m\ell,j}
=\int_{I_j}x^{2(m-\ell)-1}
I_{\ell,S}(x)I_{\ell,D}(x)\,\dd x,
\]
where
\[
I_{\ell,S}(x)=\int_0^x t^{2\ell+1}S(t)\,\dd t,
\qquad
I_{\ell,D}(x)=\int_0^x t^{2\ell+1}D(t)\,\dd t.
\]

By \eqref{SDj}, after extracting the factor \((-1)^j\),
the coefficients of \(x^2,x,1\) in \(S(x)D(x)\) on \(I_j\)
have Fourier degrees at most \(2,3,4\), respectively.
After multiplication by \(x^{2k+1}\) and integration over \(I_j\),
the resulting Fourier degrees are at most
\(2+(2k+4)\), \(3+(2k+3)\), and \(4+(2k+2)\).
Hence
\[
\deg_{\rm F}Q_{k,j}\le 2k+6,
\]
where
\(\deg_{\rm F}\) denotes the Fourier degree in \(j\vartheta\).

The calculation in Appendix~\ref{app:fourier-calculations}
(see \eqref{app:N-fourier-degree}) gives, for \(m+\ell=k-1\),
\[
\deg_{\rm F}N_{m\ell,j}\le2k+6,
\]
after extracting the factor \((-1)^j\).

After extracting \((-1)^j\), the symmetry
\((j\vartheta,x)\mapsto(j\vartheta+\pi,-x)\)
shows that the interval contributions are \(\pi\)-periodic in
\(j\vartheta\). Thus \(Q_{k,j}\) and \(N_{m\ell,j}\) are
\((-1)^j\) times \(\pi\)-periodic trigonometric polynomials of
Fourier degree at most \(2k+6\), and hence are linear combinations of
\((-1)^j e^{2irj\vartheta}\) with \(|r|\le k+3\).
Since \(\vartheta=\pi/(2n)\), the geometric-series formula gives
\begin{equation}\label{fourier-cancellation}
	\sum_{j=-n}^{n-1}(-1)^j e^{2irj\vartheta}
	=
	(-1)^{-n}e^{-2irn\vartheta}
	\frac{1-\bigl(-e^{2ir\vartheta}\bigr)^{2n}}
	{1+e^{2ir\vartheta}}
	=0,
	\qquad |r|<n.
\end{equation}
Since \(S\) and \(I_{\ell,S}\) are odd, while
\(D\) and \(I_{\ell,D}\) are even, we have
\[
\sum_{j=-n}^{n-1}Q_{k,j}=2Q_k,
\qquad
\sum_{j=-n}^{n-1}N_{m\ell,j}=2N_{m\ell}.
\]
Therefore, all contributions vanish for \(k\le n-4\), and hence
\[
B_0=\cdots=B_{n-4}=0.
\]
It remains to determine the sign of \(B_{n-3}\). 
    
    The calculation in Appendix~\ref{app:fourier-calculations}
(see \eqref{A-critical-Q}) gives
\begin{equation}\label{critical-Q}
-Q_{n-3}
=
\frac{n-1-2q^2}
{sq(n-1)(2n-4)(2n-3)}M_{2n-1}.
\end{equation}
	The coefficient of \(M_{2n-1}\)  is strictly positive for every \(n\ge3\). Hence, by \eqref{critical-M} and \eqref{critical-Q}, we have 
	\begin{equation}\label{sgn-critical-Q}
		\sgn(-Q_{n-3})=\sgn(M_{2n-1})=(-1)^n.
	\end{equation}
	
    For \(n=3,4\), the sum in \eqref{Bk} is empty for \(k=n-3\), so
	\eqref{sgn-critical-Q} already gives \(\sgn B_{n-3}=(-1)^n\).
	Assume therefore
	\(n\ge5\). Since all coefficients of \(N_{m\ell}\) in \eqref{Bk} are
	positive, it suffices to show that
	\begin{equation}\label{sgn-critical-N}
		\sgn N_{m\ell}=(-1)^n
	\end{equation}
	for every \(m>\ell\) with \(m+\ell=n-4\). 
    
    Set
	\(d=2\ell+4\) and \(\rho=\cos h/q\). Then
	\(4\le d\le n-1\). The highest Fourier coefficients associated with
\(I_{\ell,S}\) and \(I_{\ell,D}\), computed in
Appendix~\ref{app:fourier-calculations},
are expressed in terms of the polynomials
	\[
	\begin{aligned}
		f_S(w)
		&=\frac{w^{d-1}}{d-1}
		-\rho\frac{w^{d-2}}{d-2}
		+\frac{\cos h}{(d-1)(d-2)\cos(dh)},\\
		f_D(w)
		&=\frac{2\cos h\,w^{d-1}}{d-1}
		-\frac{w^{d-2}}{d-2}
		+\frac{2s\cos h}{(d-1)(d-2)\sin(dh)}.
	\end{aligned}
	\]
	Let \(H_S(t)=e^{-idt}f_S(e^{it})=U(t)+iV(t)\) and
	\(H_D(t)=e^{-idt}f_D(e^{it})=X(t)+iY(t)\). A direct calculation gives
	\begin{align}
		U''+d^2U
		&=(d+1)\cos t-\rho(d+2)\cos(2t),
		&U'(0)&=0,\quad U(h)=0,
		\label{U-BVP}\\
		Y''+d^2Y
		&=2\sin t\bigl((d+2)\cos t-(d+1)\cos h\bigr),
		&Y(0)&=Y(h)=0.
		\label{Y-BVP}
	\end{align}
	
	We first show that
	\[
	U>0,\qquad V<0,\qquad X>0,\qquad Y<0
	\quad\text{on }(0,h).
	\]
	Indeed, \(\cos t/\cos(2t)\) is increasing on \([0,h]\) and
	\(\rho\cos(2t)\ge\cos t\), so the right-hand side of
	\eqref{U-BVP} is negative, whereas that of \eqref{Y-BVP} is
	positive. Since \(dh<\pi/2\), the corresponding Green kernels of
	\(-\partial_t^2-d^2\) are positive, and hence \(U>0\) and \(Y<0\).
	Furthermore,
	\[
	V'=\cos t-\rho\cos(2t)-dU<0,\qquad
	X'=dY+2\sin t(\cos h-\cos t)<0.
	\]
	Since \(V(0)=0\), we obtain \(V<0\); and since
	\[
	(d-1)(d-2)X(h)
	=d-1-2\cos^2h+2s\cos h\cot(dh)>0,
	\]
	we obtain \(X>0\).
	
	For \(0<t<h\), we have \(0<2nt<\pi/2\), and hence
	\[
	\operatorname{Re}(e^{2nit}H_D(t))=X\cos(2nt)-Y\sin(2nt)>0.
	\]
	Moreover, we claim that
	\begin{equation}\label{Im-HD-positive}
		\begin{aligned}
			\operatorname{Im}(e^{2nit}H_D(t))
			=\frac{2\cos h\,\sin((2n-1)t)}{d-1}
			-\frac{\sin((2n-2)t)}{d-2}
			+\frac{2s\cos h\,\sin((2n-d)t)}
			{(d-1)(d-2)\sin(dh)}
			>0.
		\end{aligned}
	\end{equation}
	Indeed, since
	\(0<(2n-2)t<(2n-1)t<\pi/2\),
	we have
	\(\sin((2n-1)t)>\sin((2n-2)t)>0\).
	Moreover,
	\(2\cos h(d-2)>d-1\), because
	\(\cos h=\cos(\pi/(4n))\ge\cos(\pi/20)>3/4\) for \(n\ge5\),
	and the difference between the two sides is increasing in \(d\ge4\).
	Thus the first two terms in \eqref{Im-HD-positive} have positive sum,
	while the last term is positive, proving \eqref{Im-HD-positive}. Consequently,
	\[
	\mathcal I
	:=\int_0^h
	\operatorname{Re}\bigl(e^{2nit}H_S(t)H_D(t)\bigr)\dd t
	=
	\int_0^h
	\left[
	U\operatorname{Re}(e^{2nit}H_D(t))
	-
	V\operatorname{Im}(e^{2nit}H_D(t))
	\right]\dd t
	>0.
	\]
	
The calculation in
Appendix~\ref{app:fourier-calculations}
(see \eqref{app:critical-N}) gives
\[
N_{m\ell}
=
(-1)^n\frac{2n}{s\,2^{2n-2}}\mathcal I.
\]
	Since \(s>0\), this proves \eqref{sgn-critical-N}. Together with \eqref{sgn-critical-Q}, this yields \eqref{sgn-critical-B}. This proves Lemma~\ref{lm:degeneracy}.
	\end{proof}

	Define
\[
\mathcal D(\mathbf e,\mathbf a)=
\begin{cases}
    \bigl(M_0+2,M_1,M_3\bigr),
    & \text{if } n=3,\\[1mm]
    \bigl(M_0+2,M_1,M_3,\ldots,M_{2n-3},
    B_0,\ldots,B_{n-4}\bigr),
    & \text{if } n\ge4.
\end{cases}
\]
The next lemma shows that the components of \(\mathcal D\)
can be varied independently through changes in
\((\mathbf e,\mathbf a)\).

\begin{lemma}\label{lm:rank}
    At the parameter values given in \eqref{trig-base},
    the Jacobian matrix of \(\mathcal D\) with respect to
    \((\mathbf e,\mathbf a)\) has rank \(2n-3\).
\end{lemma}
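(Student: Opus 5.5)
The count of components is the first thing to pin down. For \(n\ge4\) the vector \(\mathcal D\) has \(n\) moment components \(M_0+2,M_1,M_3,\ldots,M_{2n-3}\) and \(n-3\) components \(B_0,\ldots,B_{n-4}\), so \(2n-3\) in total, while there are \(2n\) parameters \((\mathbf e,\mathbf a)\). I plan to exhibit a \((2n-3)\)-dimensional subspace of parameter directions on which the Jacobian is block triangular with invertible diagonal blocks.

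\emph{Moment block.} Since \(\partial M_j/\partial e_i=a_i^j\), the \(n\times n\) block \(\partial(M_0,M_1,M_3,\ldots,M_{2n-3})/\partial\mathbf e\) has columns \((1,a_i,a_i^3,\ldots,a_i^{2n-3})^{T}\). The functions \(1,x,x^3,\ldots,x^{2n-3}\) form a Chebyshev (Descartes) system on \((0,\infty)\). Indeed, a nontrivial combination \(c_0+\sum_k c_kx^{2k-1}\) has at most \(n-1\) positive zeros by Descartes' rule of signs. Hence this block is nonsingular for the distinct positive \(a_i\) in \eqref{trig-base}. For \(n=3\), this step alone proves the lemma.

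\emph{Reduction for \(n\ge4\).} By the implicit function theorem, for each direction \(\delta\mathbf a\) there is a unique compensating \(\delta\mathbf e=\delta\mathbf e(\delta\mathbf a)\) such that the first-order variations of all \(M\)-components vanish. It then suffices to show that the linear map
\[
\delta\mathbf a\mapsto\bigl(\dot B_0,\ldots,\dot B_{n-4}\bigr),
\]
where the dot denotes the derivative along \((\delta\mathbf e(\delta\mathbf a),\delta\mathbf a)\), has rank \(n-3\). To compute \(\dot B_k\), I differentiate the interval decomposition used in the proof of Lemma~\ref{lm:degeneracy}. On each \(I_j\), the functions \(S\) and \(D\) are affine in \(x\), with coefficients that are explicit trigonometric expressions in \(j\vartheta\), as in \eqref{SDj}, and the endpoints are \(\sin(j\vartheta\pm h)\). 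I choose the perturbations in Fourier form, namely \(\delta a_i=\varepsilon\cos(2p\,t_i+\phi)\) with \(p=1,\ldots,n-3\), and correspondingly for \(\delta\mathbf e\). After the extraction of \((-1)^j\), the first-order variation of \(Q_{k,j}\) and \(N_{m\ell,j}\) is then a trigonometric polynomial whose frequencies are shifted by \(\pm 2p\) relative to those in the unperturbed computation, which were at most \(2(k+3)\).

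\emph{Triangularity.} By \eqref{fourier-cancellation}, the sum over \(j\) annihilates every term except those whose total frequency reaches \(2n\). This happens exactly when \(k+3+p\ge n\). Ordering the \(B_k\) and the modes \(p\) suitably, the matrix \(\bigl(\dot B_k\bigr)_{k,p}\) is therefore triangular, after pairing \(k=n-3-p\). Its diagonal entries are the coefficients of the critical frequency \(2n\), analogous to \eqref{critical-M} and \eqref{critical-Q}.

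\emph{Main obstacle.} The key remaining step is to show that these diagonal entries are nonzero. I would first try the route taken for \(B_{n-3}\). Express each diagonal entry through the top Fourier coefficient as a positive multiple of a modified moment \(\dot M_{2(k+3)-1}\), which plays the role of \(M_{2n-1}\). This moment is nonzero by the same computation of the coefficient of \(\sin(2nt)\) as in \eqref{critical-M}. The \(N_{m\ell}\) contributions would be controlled by the sign argument built from \eqref{U-BVP}–\eqref{Y-BVP}.

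If the sign bookkeeping becomes too delicate, the fallback is weaker but sufficient. Each diagonal entry is a nontrivial trigonometric-polynomial expression in \(\cos(\pi/(4n))\), so it suffices to show it is not identically zero. For that I would evaluate its leading asymptotics as \(h\to0\), that is, as \(n\to\infty\) with the mode index fixed relative to \(n\). Small \(n\) would then be checked directly.
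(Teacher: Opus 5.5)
Your treatment of the moment block (Descartes' rule of signs in place of the generalized Vandermonde remark) and the reduction to the manifold $\mathcal M$ agree with the paper. The triangularity step, however, fails for the modes you chose. The paper shows that along $\mathcal M$ one has $D_iB_k=e_iP_k(a_i^2)$ for a polynomial $P_k$, so $\dot B_k=\sum_i e_iP_k(a_i^2)\,\delta a_i$. With $e_i=4(-1)^i\cos h\cos t_i$, $a_i=\sin t_i$ and $\delta a_i=\cos(2pt_i+\phi)$, the summand $\cos t\,P_k(\sin^2t)\cos(2pt+\phi)$ contains only odd frequencies. For odd $\nu$, $\sum_{i=1}^n(-1)^ie^{\mathrm{i}\nu t_i}=-\bigl(1-(-1)^n\mathrm{i}^{\nu}\bigr)/(2\cos\nu h)\neq0$. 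So no frequency is annihilated, and the entries with $k+p+3<n$ have no reason to vanish. In your interval picture, the error is the claim that the variation is the unperturbed term shifted by $\pm2p$. Differentiating replaces an odd-frequency quantity $a_i=\sin t_i$ (at a node or inside the coefficients of $S,D$) by $\delta a_i$. With $\delta a$ of even frequency $2p$, the variations of $Q_{k,j}$ and $N_{m\ell,j}$, after removing $(-1)^j$, therefore carry odd frequencies in $j\vartheta$, which \eqref{fourier-cancellation} does not kill. Moreover, unless $\cos\phi=0$ the perturbation does not extend to $j<0$ by a single formula, since that requires $\delta a$ to be odd in $t$. The modes that do give a triangular matrix are $\delta a_i=\sin((2p+1)t_i)$. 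Separately, you never control the compensating $\delta\mathbf e$: it solves a generalized Vandermonde system, there is no reason for it to be a single Fourier mode, and your frequency count does not cover it. The paper never computes $\delta\mathbf e$. Via the node polynomials of Appendix~\ref{app:node-polynomials}, $\partial_{e_i}Q_k$ and $\partial_{e_i}N_{m\ell}$ are values at $a_i$ of a fixed function in $\operatorname{span}\{1,x,x^3,\ldots,x^{2n-3}\}$, so \eqref{app:moment-cancellation} turns the $\delta\mathbf e$-terms into $\delta\mathbf a$-terms.

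Even with the corrected modes, the diagonal entry at $p=n-3-k$ equals $-n\cos h$ times the coefficient of $\cos(2(k+2)t)$ in $P_k(\sin^2t)$, which is a nonzero multiple of $[z^{k+2}]P_k$. So your route reduces exactly to the paper's crux, and there your plan has no working argument. Read literally, the \emph{modified moment} $\dot M_{2k+5}$ vanishes identically along $\mathcal M$ for $k\le n-4$, because $M_{2k+5}=0$ is one of the defining equations. The expression of $Q_k$ through $M_{2k+3},M_{2k+5}$ in Appendix~\ref{app:fourier-calculations} rests on \eqref{SDj} and holds only at \eqref{trig-base}. If it held along $\mathcal M$, $Q_k$ would be constant on $\mathcal M$, contradicting $\kappa_Q>0$ below. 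The boundary-value sign argument concerns $N_{m\ell}$ itself with $m+\ell=n-4$, not derivatives of $N_{m\ell}$ with $m+\ell=k-1$. The fallback is not a proof either. Since $h=\pi/(4n)$ and the range $0\le k\le n-4$ are tied to $n$, non-vanishing as a function of $h$, or asymptotically for large $n$, says nothing at a given $n$, and no explicit threshold for a finite check is available. The missing idea is the explicit evaluation of the leading coefficients: $\kappa_Q=F_*(2k+5)-F_*(2k+4)>0$ and $\kappa_N=-\bigl(F_*[2\ell+4,2m+4,2k+5]-F_*[2\ell+4,2m+4,2k+4]\bigr)<0$. These follow from $F_*'>0$ and $F_*'''>0$ on $(1,2n)$ via the Euler product. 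Hence $[z^{k+2}]P_k=-\kappa_Q+\sum_{m+\ell=k-1,\ m>\ell}2(m-\ell)^2c_mc_\ell\kappa_N<0$, so $\deg P_k=k+2$ exactly. Any linear relation among the reduced differentials of $B_0,\ldots,B_{n-4}$ would then give a nonzero polynomial of degree at most $n-2$ vanishing at the $n$ distinct points $a_i^2$ (using $e_i\neq0$).
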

\begin{proof}
	We first show that the Jacobian of
\(M_0+2,M_1,M_3,\ldots,M_{2n-3}\) with respect to
\(\mathbf e\) is nonsingular. For \(n\ge4\), we then
show that the differentials of \(B_0,\ldots,B_{n-4}\),
restricted to the manifold defined by
\eqref{moment-condition}, are linearly independent.
	
	The Jacobian of \(M_0+2,M_1,M_3,\ldots,M_{2n-3}\) with respect to
	\(\mathbf e=(e_1,\ldots,e_n)\) is
	\[
	\frac{\partial(M_0+2,M_1,M_3,\ldots,M_{2n-3})}
	{\partial(e_1,\ldots,e_n)}
	=
	\begin{pmatrix}
		1&1&\cdots&1\\
		a_1&a_2&\cdots&a_n\\
		a_1^3&a_2^3&\cdots&a_n^3\\
		\vdots&\vdots&&\vdots\\
		a_1^{2n-3}&a_2^{2n-3}&\cdots&a_n^{2n-3}
	\end{pmatrix}.
	\]
	Since \(0<a_1<\cdots<a_n\), this generalized Vandermonde matrix is
	nonsingular. Hence these \(n\) components of \(\mathcal D\) have full
	rank with respect to \(\mathbf e\).
	
	For \(n=3\), this proves Lemma~\ref{lm:rank}. We therefore assume that
	\(n\ge4\). Since the above Jacobian is nonsingular, the equations
	\eqref{moment-condition} define a smooth \(n\)-dimensional manifold \(\mathcal M\) in the \((\mathbf e,\mathbf a)\)-parameter space
near the parameter values given in \eqref{trig-base}..
	Moreover, \(a_1,\ldots,a_n\) can be used as local coordinates on
	\(\mathcal M\). Let \(D_i\) denote differentiation with respect to
	\(a_i\) along \(\mathcal M\). All parameter derivatives below are evaluated at
the parameter values \eqref{trig-base}.
	
	We claim that, for \(0\le k\le n-4\),
	\begin{equation}\label{gradient-poly}
		D_iB_k=e_iP_k(a_i^2),
		\qquad
		\deg P_k=k+2.
	\end{equation}
	Once \eqref{gradient-poly} is established, the differentials of
	\(B_0,\ldots,B_{n-4}\) are linearly independent on \(\mathcal M\).
	Indeed, any nontrivial linear relation among them would give a nonzero
	polynomial of degree at most \(n-2\) vanishing at the \(n\) distinct
	points \(a_1^2,\ldots,a_n^2\), which is impossible.
	
	The calculations in Appendix~\ref{app:constrained-derivatives} (see \eqref{app:Q-gradient} and \eqref{app:N-gradient}) give, for \(0\le k\le n-4\),
\[
D_iQ_k=e_iP_Q(a_i^2),
\qquad
\deg P_Q\le k+2.
\]
For \(m+\ell=k-1\) with \(m>\ell\), they also give
\[
D_iN_{m\ell}=e_iP_N(a_i^2),
\qquad
\deg P_N\le k+2.
\]
Here the dependence of \(P_Q\) and \(P_N\) on their indices
is suppressed. Let \(\kappa_Q\) and \(\kappa_N\) denote
the coefficients of \(t^{k+2}\) in \(P_Q(t)\) and \(P_N(t)\),
respectively.

	We next compute \(\kappa_Q\) and \(\kappa_N\).
	Recall that \(\vartheta=\pi/(2n)\), and define
	\[
	T(x)
	=
	\frac{x\sin((x-2)\vartheta)-(x-2)\sin(x\vartheta)}
	{(x-1)(x-2)\sin(x\vartheta)},
	\qquad
	F_*(x)
	=
	\frac{T(x)-1}
	{\sin(\vartheta/2)\cos\vartheta}.
	\]
	The calculations in Appendix~\ref{app:constrained-derivatives} (see \eqref{app:kappa-Q} and \eqref{app:kappa-N}) give
\[
\kappa_Q=F_*(2k+5)-F_*(2k+4)
\]
and, for \(m+\ell=k-1\) with \(m>\ell\),
\[
\kappa_N
=-\Bigl(
F_*[2\ell+4,2m+4,2k+5]
-
F_*[2\ell+4,2m+4,2k+4]
\Bigr),
\]
where \(F_*[\,\cdot,\cdot,\cdot\,]\) denotes the second
divided difference of \(F_*\).
	
	We next determine their signs. Put \(z=x-1\). Using the Euler product
	for \(\sin x\), we obtain
	\[
	T(1+z)
	=
	\sum_{i\ge1}
	A_i(z)\prod_{j<i}\bigl(1+zA_j(z)\bigr),
	\]
	where
	\[
	A_i(z)
	=
	\frac{4\vartheta^2}
	{\pi^2i^2-(z+1)^2\vartheta^2},
	\qquad i\ge1.
	\]
	Every Taylor coefficient of \(A_i\) at \(z=0\) is strictly positive,
	and the series above converges locally uniformly for
\(|z|<2n-1\). Hence for $1<x<2n$,
	\(T'(x)>0\), \(T'''(x)>0\). Since \(\sin(\vartheta/2)\cos\vartheta>0\), the same inequalities
hold for \(F_*\).

All nodes occurring in the formulas for
\(\kappa_Q\) and \(\kappa_N\) lie in
\([4,2n-3]\subset(1,2n)\). Thus the inequality \(F_*'(x)>0\) gives \(\kappa_Q>0\).
	Moreover, \(F_*'''(x)>0\) implies that the second divided difference
	of \(F_*\) is strictly increasing in its third node, and hence
	\(\kappa_N<0\).
	
	By \eqref{Bk}, the \(Q_k\)-coefficient is negative and the
	\(N_{m\ell}\)-coefficients are positive, so the leading coefficient
	of the reduced gradient of \(B_k\) is negative. Thus \eqref{gradient-poly} holds. Consequently,
	the differentials of \(B_0,\ldots,B_{n-4}\) are linearly independent
	on \(\mathcal M\), and the Jacobian matrix of \(\mathcal D\) with respect to
\((\mathbf e,\mathbf a)\) has rank \(2n-3\).
	This proves Lemma~\ref{lm:rank}.
	\end{proof}

	Let \(\widehat F_R\) and \(\widehat F_L\) be the affine
	functions agreeing with \(F_R\) and \(F_L\), respectively,
	for \(x\ge a_n\).
	
	To study the return map near infinity, define
\[
\widehat G(x)=
\begin{cases}
\widehat F_R(x),& \text{if } x\ge0,\\
-\widehat F_L(-x),& \text{if }x<0.
\end{cases}
\]
Let \(P_R,P_L\) denote the half-return maps for \(F=\eta G\) in the right and left half-planes, respectively, and let
\(\widehat P_R,\widehat P_L\) denote the corresponding maps
for \(F=\eta\widehat G\). 

Then we relate 
	\(P_R,P_L\) to \(\widehat P_R,\widehat P_L\) by conjugacy.
We construct the conjugacies using the transition equations
\[
\frac{\dd y}{\dd x}=-\frac{x}{y-\eta G(x)},
\qquad
\frac{\dd y}{\dd x}=-\frac{x}{y-\eta\widehat G(x)}.
\]
For sufficiently large \(|y_0|\), the solutions with
\(y(0)=y_0\), taken separately in the two half-planes,
are defined up to \(x=\pm a_n\) and satisfy
\(y(x)=y_0+O(|y_0|^{-1})\) uniformly in \(x\).
For the first equation, denote their values at \(x=a_n\)
and \(x=-a_n\) by \(T_R(y_0)\) and \(T_L(y_0)\), respectively,
and define \(\widehat T_R,\widehat T_L\) analogously for
the second equation.
    
	These transition maps are locally invertible by uniqueness
	of solutions and their dependence on \(\eta\) is suppressed. Define the corresponding conjugating maps
	\[
	C_R=\widehat T_R^{-1}\circ T_R,
	\qquad
	C_L=\widehat T_L^{-1}\circ T_L.
	\]
    They conjugate the half-return maps according to
\begin{equation}\label{half-return-conjugacy}
P_R=C_R^{-1}\circ\widehat P_R\circ C_R,
\qquad
P_L=C_L^{-1}\circ\widehat P_L\circ C_L .
\end{equation}
	In the coordinate \(u=1/y\), the transition maps and
	their inverses are analytic near \(u=0\).
	Returning to \(y\), we obtain the expansions
	\[
	C_R(y)=y+\sum_{j\ge1}A_{j,R}y^{-j},
	\qquad
	C_L(y)=y+\sum_{j\ge1}A_{j,L}y^{-j}.
	\]
	
	The following lemma relates the coefficients in these
	expansions to \(M_j\) and \(B_k\).
	
	\begin{lemma}\label{lm:return-coefficients}
		If \(M_0+2=M_1=0\), then
		\begin{equation}\label{Aeven}
A_{2j,R}-A_{2j,L}
=-\frac{c_{j-1}}{(2j)(2j+1)}
\eta M_{2j+1}+O(\eta^3),
\qquad 1\le j\le n-1.
\end{equation}
		Moreover, under \eqref{moment-condition}, we have
		\begin{equation}\label{Aodd}
			A_{2k+3,R}-A_{2k+3,L}
			=\eta^2B_k+O(\eta^4),
			\qquad 0\le k\le n-3.
		\end{equation}
	\end{lemma}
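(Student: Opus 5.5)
We propose to compute the conjugating maps \(C_R\) and \(C_L\) perturbatively in \(\eta\), working in the orbit equation \(\dd y/\dd x=-x/(y-\eta G(x))\) on the strip \(0\le x\le a_n\) (and the mirror strip on the left). For large \(|y_0|\) we write the solution as \(y(x)=y_0\sqrt{1-x^2/y_0^2}+\eta y_1(x)+\eta^2y_2(x)+O(\eta^3)\). More convenient is the energy variable \(E=y^2+x^2\), for which
\[
\frac{\dd E}{\dd x}=\frac{2x\eta G(x)}{y-\eta G(x)}.
\]
Expanding \(1/(y-\eta G)\) in powers of \(\eta G/y\), and \(1/y\) in powers of \(x^2/E\) with binomial coefficients \(c_j=\binom{2j}{j}/4^j\), gives a Picard series. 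At first order the increment of \(E\) across the strip is
\[
2\eta\sum_{j\ge0}c_j E^{-j-1/2}\int_0^{a_n}x^{2j+1}G(x)\dd x .
\]
Taking the difference of this quantity for \(G\) and for \(\widehat G\) gives \(C_R\). The same computation with \(F_L\) gives \(C_L\), with the sign conventions coming from the reflection \(G(x)=-F_L(-x)\).

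At first order, \(\int_0^{a_n}x^{2j+1}(F_R-\widehat F_R)\dd x\) is a finite sum over odd \(i\) of \(e_i\int_0^{a_i}x^{2j+1}(a_i-x)\dd x=e_ia_i^{2j+3}/((2j+2)(2j+3))\). The \(L\)-version gives the same sum over even \(i\), and the reflection flips its sign relative to \(R\). Hence the difference \(A_{\cdot,R}-A_{\cdot,L}\) at order \(\eta\) is proportional to the full moment \(\sum_i e_ia_i^{2j+3}=M_{2j+3}\). After converting back from \(E\) to \(y\) and reindexing, the term \(y^{-2j}\) carries \(M_{2j+1}\) with coefficient \(-c_{j-1}/((2j)(2j+1))\), which gives \eqref{Aeven}. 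Only odd powers \(x^{2j+1}\) appear at first order, so the first-order parts produce only even \(A_{2j}\). The second-order contributions to even indices cancel in the \(R-L\) difference under \(M_0+2=M_1=0\). This is because the second-order term is even in \(G\) after reflection, so its contribution to \(R-L\) has the opposite parity. That leaves an \(O(\eta^3)\) error, which we will confirm by tracking parity in \(y\mapsto -y\) together with \(\eta\mapsto-\eta\).

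For \eqref{Aodd} we go to second order. Under \eqref{moment-condition} all first-order corrections up to the relevant order coincide for \(R\) and \(L\). The odd coefficients then arise from the \(\eta^2\) terms. There are two sources. The direct term \(\eta^2\int x^{2k+1}G^2/y^3\) yields \(Q_k\) via \(F_R^2-F_L^2\). The other source is the square of the first-order Picard iterate, \(\eta^2\int x^{2(m-\ell)-1}I_{\ell}(x)^2\). These combine with coefficients \(2(m-\ell)^2c_mc_\ell\) into \(N_{m\ell}\), matching the definition \eqref{Bk} of \(B_k\).

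The affine replacement \(\widehat G\) contributes polynomial-in-\(a_n\) terms. We expect these to cancel in \(C=\widehat T^{-1}\circ T\): since \(F\) and \(\widehat F\) differ only by the hinge sums, the moment conditions kill the lower-order mixed terms. The main obstacle is this second-order bookkeeping. One must show that the cross terms between first-order corrections, the inversion \(\widehat T^{-1}\), and the \(E\)-to-\(y\) reparametrization all reduce, under \(M_1=\cdots=M_{2n-3}=0\), to exactly \(-Q_k+\sum 2(m-\ell)^2c_mc_\ell N_{m\ell}\), with the error being \(O(\eta^4)\) by parity in \(\eta\). We would organize this by writing \(T_R(y_0)-\widehat T_R(y_0)\) as an iterated integral (Chen-series style) and checking that composing with \(\widehat T_R^{-1}\) only shifts indices by terms proportional to vanishing moments.
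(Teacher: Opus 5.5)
Your outline follows the paper's route: a perturbative expansion of the strip transition in a radial variable, binomial expansion in \(1/y\) with the \(c_j\), and parity under \((y,\eta)\mapsto(-y,-\eta)\). The first-order computation giving \eqref{Aeven} is fine, apart from a sign slip: \(\mathrm{d}E/\mathrm{d}x=-2\eta xG/(y-\eta G)\), so you reach the stated coefficient only by quoting it.

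The gap is \eqref{Aodd}, which is the substance of the lemma. You explicitly defer the second-order bookkeeping, and your account of where \(N_{m\ell}\) comes from is not accurate. It is cleaner to work with \(r=\sqrt{x^2+y^2}\) rather than \(E\). On \(x=0\) it equals \(y\), and the endpoint change \(r\mapsto\sqrt{r^2-a_n^2}\) at \(x=a_n\) is the same for \(f\) and \(\widehat f\). Hence \(C_R=W_{\widehat F_R}(\eta,\cdot)^{-1}\circ W_{F_R}(\eta,\cdot)\) exactly, and your reparametrization worry disappears. The second Picard iterate is linear in the first, not a square:
\[
[y^{-2k-3}]W_{2,f}=-\int_0^{a_n}x^{2k+1}f^2\dd x-\sum_{m+\ell=k-1}(2m+2)c_mc_\ell\int_0^{a_n}I'_{m,f}I_{\ell,f}\dd x .
\]
The squares in \(N_{m\ell}\) appear only after integration by parts, using \(I'_{m,f}=x^{2(m-\ell)}I'_{\ell,f}\). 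That step needs three ingredients:
\begin{enumerate}
\item The identity \(I_{\ell,R}(a_n)+I_{\ell,L}(a_n)=M_{2\ell+3}/((2\ell+2)(2\ell+3))=0\) for \(\ell\le n-3\), which kills the boundary terms.
\item The resulting formula \(\int_0^{a_n}(I'_{m,R}I_{\ell,R}-I'_{m,L}I_{\ell,L})\dd x=-(m-\ell)N_{m\ell}\) for \(m>\ell\), together with antisymmetry under \(m\leftrightarrow\ell\) and vanishing on the diagonal.
\item Pairing \((m,\ell)\) with \((\ell,m)\), which yields \((2m+2)-(2\ell+2)=2(m-\ell)\) and hence the weight \(2(m-\ell)^2\).
\end{enumerate}
None of this is in your proposal. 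The sentence ``these combine \ldots\ matching the definition'' states the conclusion rather than arguing it.

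The composition with \(\widehat T^{-1}\) is likewise only ``expected.'' The hypothesis \(M_0+2=M_1=0\) enters exactly here, in the form \(\widehat F_L=-\widehat F_R\). After reflection the left map is \(C_L=W_{\widehat F_L}(-\eta,\cdot)^{-1}\circ W_{F_L}(-\eta,\cdot)\). Using \(W_{1,-f}=-W_{1,f}\) and \(W_{2,-f}=W_{2,f}\), expanding both compositions gives
\[
C_R-C_L=\eta\bigl(W_{1,F_R}+W_{1,F_L}\bigr)+\eta^2\bigl[W_{2,F_R}-W_{2,F_L}-W'_{1,\widehat F_R}\bigl(W_{1,F_R}+W_{1,F_L}\bigr)\bigr]+O(\eta^3).
\]
The \(\widehat W_2\) terms cancel outright. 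Under \eqref{moment-condition} the inversion correction is \(O(y^{-3})\cdot O(y^{-2n+2})=O(y^{-2n-1})\), so it cannot reach \(y^{-2k-3}\) for \(k\le n-3\). This is the precise content of your remark that ``composing with \(\widehat T_R^{-1}\) only shifts indices by vanishing moments.'' With it and the integration-by-parts identities above, \eqref{Aodd} follows. The \(O(\eta^4)\) remainder then comes from the parity argument you already have, together with analyticity of the coefficients in \(\eta\), which one gets by splitting the transition at the fold points.
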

\begin{proof}
	Integrating \(-x/y+O(|y|^{-2})\) from \(0\) to the corresponding
endpoint \(x=\pm a_n\) gives \(y-a_n^2/(2y)+O(|y|^{-2})\)
for all four transition maps \(T_R,T_L,\widehat T_R,\widehat T_L\). Consequently, the common \(y^{-1}\) terms in
	\(C_R\) and \(C_L\) vanish, giving \(A_{1,R}=A_{1,L}=0\).
	
	If \(M_0+2=M_1=0\), we have \(\widehat F_R(x)+\widehat F_L(x)=(2+M_0)x-M_1=0\).
	Expanding the conjugating maps to first order in \(\eta\) therefore gives
	\[
	\left.
	\frac{\partial}{\partial\eta}
	\bigl(A_{2j,R}-A_{2j,L}\bigr)
	\right|_{\eta=0}
	=-c_{j-1}\int_0^{a_n}x^{2j-1}
	\bigl(F_R(x)+F_L(x)\bigr)\dd x
	=-\frac{c_{j-1}}{(2j)(2j+1)}M_{2j+1},
	\]
	which yields the coefficient of \(\eta\) in \eqref{Aeven}. The \(O(\eta^3)\) remainder follows from the parity in \(\eta\),
	which we establish below.
	
	We next compute the coefficient of \(\eta^2\) in \eqref{Aodd}
	under \eqref{moment-condition}.
	Introduce \(r=\sqrt{x^2+y^2}\), so that \(r=y\)
	at \(x=0\) for \(y>0\). Then for \(f\in\{F_R,F_L,\widehat F_R,\widehat F_L\}\),
	\[
	\frac{\dd r}{\dd x}
	=-\eta\frac{xf(x)}{r\sqrt{r^2-x^2}}
	-\eta^2\frac{xf(x)^2}{r(r^2-x^2)}+O(\eta^3).
	\]
	Write its transition map as
	\(W_f(\eta,y)=y+\eta W_{1,f}(y)+\eta^{2} W_{2,f}(y)+O(\eta^3)\), and
	put \(I_{\ell,f}(x)=\int_0^x t^{2\ell+1}f(t)\dd t\).
	Expanding the integral equation for \(r\) to second order
	in \(\eta\) and then applying the binomial expansion
	in powers of \(y^{-1}\), we obtain
	\[
	\begin{aligned}
		W_{1,f}(y)&=-\sum_{j\ge0}c_jI_{j,f}(a_n)y^{-2j-2},\\
		[y^{-2k-3}]W_{2,f}
		&=-\int_0^{a_n}x^{2k+1}f(x)^2\dd x-\sum_{m+\ell=k-1}(2m+2)c_mc_\ell
		\int_0^{a_n}I'_{m,f}(x)I_{\ell,f}(x)\dd x,
	\end{aligned}
	\]
	where \([y^{-2k-3}]W_{2,f}(y)\) denotes the coefficient of
	\(y^{-2k-3}\) in the expansion of \(W_{2,f}(y)\) at infinity.
	By \eqref{moment-condition}, we have
	\[
	I_{\ell,R}(a_n)+I_{\ell,L}(a_n)
	=\frac{M_{2\ell+3}}{(2\ell+2)(2\ell+3)}=0,
	\qquad 0\le\ell\le n-3.
	\]
	Consequently, \(I_{m,R}(a_n)I_{\ell,R}(a_n)
	=I_{m,L}(a_n)I_{\ell,L}(a_n)\).
	For \(m>\ell\), using
	\(I'_{m,R}=x^{2(m-\ell)}I'_{\ell,R}\) and
	\(I'_{m,L}=x^{2(m-\ell)}I'_{\ell,L}\), we obtain
	\[
	\begin{aligned}
		\int_0^{a_n}
		\bigl(I'_{m,R}I_{\ell,R}-I'_{m,L}I_{\ell,L}\bigr)\dd x&=\frac12\int_0^{a_n}x^{2(m-\ell)}
		\bigl(I_{\ell,R}^2-I_{\ell,L}^2\bigr)'\dd x\\
		\quad&=\frac12
		\Bigl[x^{2(m-\ell)}
		\bigl(I_{\ell,R}^2-I_{\ell,L}^2\bigr)\Bigr]_0^{a_n}
		-(m-\ell)N_{m\ell}\\
		\quad	&=-(m-\ell)N_{m\ell}.
	\end{aligned}
	\]
	Integration by parts also gives
	\[
	\int_0^{a_n}
	\bigl(I'_{m,R}I_{\ell,R}-I'_{m,L}I_{\ell,L}\bigr)\dd x
	=-\int_0^{a_n}
	\bigl(I'_{\ell,R}I_{m,R}-I'_{\ell,L}I_{m,L}\bigr)\dd x.
	\]
	Thus, grouping the terms with indices \((m,\ell)\) and
	\((\ell,m)\) in the sum for \(W_{2,f}\), we obtain
	\[
	[y^{-2k-3}](W_{2,F_R}-W_{2,F_L})
	=-Q_k+\sum_{\substack{m+\ell=k-1\\m>\ell}}
	2(m-\ell)^2c_mc_\ell N_{m\ell}=B_k.
	\]
	
We now transfer this coefficient identity to \(C_R-C_L\).
Reflecting the left half-plane by \(\xi=-x\) and using
the fact that the radial coordinate change at \(x=\pm a_n\)
is independent of \(f\), we have
\[
C_R=W_{\widehat F_R}(\eta,\cdot)^{-1}
\circ W_{F_R}(\eta,\cdot),
\qquad
C_L=W_{\widehat F_L}(-\eta,\cdot)^{-1}
\circ W_{F_L}(-\eta,\cdot).
\]
Using \(\widehat F_R+\widehat F_L=0\),
\(W_{1,-f}=-W_{1,f}\), and \(W_{2,-f}=W_{2,f}\),
we obtain the following expansion.
	\[
	\begin{aligned}
		C_R(y)-C_L(y)
		&=\eta\bigl(W_{1,F_R}+W_{1,F_L}\bigr)\\
		&\quad+\eta^2\Bigl[
		W_{2,F_R}(y)-W_{2,F_L}(y)
		-W'_{1,\widehat F_R}(y)
		\bigl(W_{1,F_R}(y)+W_{1,F_L}(y)\bigr)
		\Bigr]+O(\eta^3).
	\end{aligned}
	\]
	By \eqref{moment-condition},
	\(W_{1,F_R}+W_{1,F_L}=O(y^{-2n+2})\).
	Together with \(W'_{1,\widehat F_R}=O(y^{-3})\), this gives
	\[
	W'_{1,\widehat F_R}(y)
	\bigl(W_{1,F_R}(y)+W_{1,F_L}(y)\bigr)
	=O(y^{-2n-1}).
	\]
	Thus the correction contributes no \(y^{-2k-3}\) term
	for \(0\le k\le n-3\), and the coefficient of \(\eta^2\)
	in \(A_{2k+3,R}-A_{2k+3,L}\) is \(B_k\).
	
	Splitting the transition at the fold points shows that
	the coefficients depend analytically on
	\((\eta,\mathbf e,\mathbf a)\).
	Since \(C_R=C_L=\mathrm{id}\) at \(\eta=0\), all these
	coefficients vanish there.
	Finally, the symmetry \((y,\eta)\mapsto(-y,-\eta)\)
	makes \(A_{2j,R}\), \(A_{2j,L}\) odd for \(j\ge1\)
	and \(A_{2j+1,R}\), \(A_{2j+1,L}\) even for \(j\ge0\)
	in \(\eta\).
	This gives the remainders in \eqref{Aeven} and \eqref{Aodd}
	and completes the proof.
	\end{proof}
	
	Lemmas \ref{lm:average}-\ref{lm:return-coefficients} will be used to construct coexisting
	limit cycles at finite amplitudes and near infinity,
	yielding Theorem \ref{thm:main}.

\noindent{\bf Proof of Theorem~\ref{thm:main}.}
	First, consider the case $n=1$. In \cite{LPV}, Llibre, Ponce and Valls 
	proved that system \eqref{sys} with
	\[
	F(x)=
	\begin{cases}
		a_lx, & \text{if } x\leq x_1,\\
		a_r(x-x_1)+a_lx_1, & \text{if } x>x_1,
	\end{cases}
	\]
	can admit one limit cycle, where $(a_l,a_r,x_1)\in\mathbb{R}^3$.
	Hence, Theorem \ref{thm:main} holds for $n=1$.
	
	Next, consider the case $n=2$. According to \cite{CFJM,CJT,CLXY,LPV} and the
	references therein, there exists a system \eqref{sys}, with $F(x)$ piecewise
	linear on three intervals and having two fold points, that admits
	exactly two limit cycles. Hence, Theorem \ref{thm:main} also holds for $n=2$.
	
	Assume therefore that \(n\ge3\). Consider system
	\eqref{sys} with \(F(x)=\eta G(x)\).
	
	We first obtain periodic
	orbits at finite amplitudes, then construct additional limit
	cycles near infinity, and finally make the selected periodic
	orbits hyperbolic by a small perturbation.
	
	By Lemmas~\ref{lm:finite} and~\ref{lm:degeneracy},
\(\Phi\) has at least \(n-2\) positive sign-changing zeros
at the parameter values in \eqref{trig-base}.
Choose pairwise disjoint compact intervals
\([r_j^-,r_j^+]\subset(0,\infty)\), \(j=1,\ldots,n-2\),
on each of which \(\Phi\) has opposite endpoint signs.

By continuity in \((\mathbf e,\mathbf a)\), these signs persist
in a sufficiently small fixed neighborhood of the parameter
values in \eqref{trig-base}.
On this neighborhood, the functions \(G\) have uniform
Lipschitz and linear-growth bounds, so the proof of
Lemma~\ref{lm:average} gives a uniform \(O(\eta^2)\) remainder
at the finitely many selected endpoints.

Hence, for all sufficiently small \(\eta>0\),
\(\Delta_\eta\) has opposite signs at the endpoints of each
interval and therefore has a zero in each interval,
corresponding to a periodic orbit.
All subsequent parameter perturbations are chosen to
preserve these endpoint signs.
	
	To construct additional limit cycles coexisting with the
	periodic orbits in these intervals, we next construct a degenerate parameter curve using
	Lemma~\ref{lm:return-coefficients}.
	
	By Lemma~\ref{lm:rank}, select \(2n-3\) coordinates of
	\((\mathbf e,\mathbf a)\), collected into a parameter vector
	\(\mathbf p\), such that the Jacobian of \(\mathcal D\) with
	respect to \(\mathbf p\) is nonsingular at the parameter
	values in \eqref{trig-base}. Let \(\mathbf p_0\) denote the
	corresponding value of \(\mathbf p\). Fix the remaining
	three coordinates at their values in \eqref{trig-base}
	and let \(\mathbf p\) vary near \(\mathbf p_0\).
	
	We seek a parameter curve through \(\mathbf p_0\) along
	which \(M_0+2=M_1=0\) and
	\begin{equation}\label{conjugacy-conditions}
A_{j,R}-A_{j,L}=0,
\qquad j=2,\ldots,2n-4.
\end{equation}
	 Divide the equations with even \(j\) by \(\eta\),
and those with odd \(j\) by \(\eta^2\).
	At \(\eta=0\), the equations \(M_0+2=M_1=0\), together
	with the even-indexed equations after division, give
	\eqref{moment-condition} by \eqref{Aeven}.
	On the moment manifold \(\mathcal M\), the odd-indexed equations,
	after division, reduce to \(B_0=\cdots=B_{n-4}=0\)
	by \eqref{Aodd}.
	Elementary row operations therefore transform the Jacobian
	of the full normalized system with respect to \(\mathbf p\),
	evaluated at \(\eta=0\) and \(\mathbf p=\mathbf p_0\),
	into \(\partial_{\mathbf p}\mathcal D(\mathbf p_0)\),
	up to nonzero row factors.
	The latter is nonsingular by the choice of \(\mathbf p\).
	
	As shown in the proof of Lemma~\ref{lm:return-coefficients},
	after division by \(\eta\) and \(\eta^2\), respectively,
	the left-hand sides of \eqref{conjugacy-conditions}
	extend analytically to \(\eta=0\) as functions of
	\(\eta^2\) and \(\mathbf p\).
	Hence the implicit function theorem yields a parameter
	curve, analytic in \(\eta^2\),
	\begin{equation}\label{pstar}
		\mathbf p^*(\eta)=\mathbf p_0+O(\eta^2),
	\end{equation}
	along which \(M_0+2=M_1=0\) and
	\eqref{conjugacy-conditions} hold.
	Since \(\mathbf p^*(\eta)\to\mathbf p_0\) as \(\eta\to0^+\),
	the endpoint signs on the fixed compact intervals persist
	along this curve for sufficiently small \(\eta>0\).
	
	Denote the corresponding \(G\) by \(G_{\mathbf p}\), and
	let \(P_{\eta,\mathbf p}\) be the return map associated
	with \(F_{\eta,\mathbf p}=\eta G_{\mathbf p}\).
	With \(u=1/r\), the displacement function near infinity
\[
U(u;\eta,\mathbf p)
=u\left(P_{\eta,\mathbf p}(1/u)-\frac1u\right)
=\alpha_0+\alpha_1u+\sum_{j\ge2}\alpha_j u^j
\]
	is analytic near \(u=0\), where
	\(\alpha_j=\alpha_j(\eta,\mathbf p)\).
    
For sufficiently small \(\eta>0\) and parameters near
\(\mathbf p_0\), the equilibria of the affine systems
corresponding to \(\widehat F_R\) and \(\widehat F_L\)
are foci lying on the section \(x=0\).
Thus the corresponding half-return maps
\(\widehat P_R\) and \(\widehat P_L\) are affine.

By the conjugacy relations
\eqref{half-return-conjugacy} and the composition \(P_{\eta,\mathbf p}=P_{\eta,\mathbf p,L}\circ P_{\eta,\mathbf p,R }\), Lemma~\ref{lm:return-coefficients} implies that the expansion
of \(P_{\eta,\mathbf p}(r)\) at infinity contains no
\(r^{-1}\) term. Therefore,
\(\alpha_2\equiv0\).
	
	Set
	\[
	\sigma(\mathbf p)
	=
	1+\sum_{\substack{1\le i\le n\\ i\text{ odd}}}e_i,
	\qquad
	\kappa=\exp\left(
	-\frac{\pi\eta \sigma(\mathbf p)}{\sqrt{4-\eta^2 \sigma(\mathbf p)^2}}
	\right).
	\]
	At \(\mathbf p=\mathbf p_0\), we have \(\sigma(\mathbf p_0)=-\cot(\pi/(4n))<0\),
	so \(\kappa>1\) near \(\mathbf p_0\) for sufficiently small \(\eta>0\).
	
A direct computation of \(\widehat P_R\) and \(\widehat P_L\)
shows that \(M_0+2=M_1=0\) implies \(\alpha_0=\alpha_1=0\).
Moreover, \(\alpha_2\equiv0\), as shown above.
For each \(j\ge2\), after the preceding coefficients
\(\alpha_0,\ldots,\alpha_j\) have been eliminated,
\eqref{half-return-conjugacy} and the corresponding expansions
of \(C_R,C_L\) give
\begin{equation}\label{alpha-A}
\alpha_{j+1}
=\bigl(1+(-1)^j\kappa^{-j-1}\bigr)
\bigl(A_{j,R}-A_{j,L}\bigr).
\end{equation}
Thus \eqref{conjugacy-conditions} and \eqref{alpha-A} give
	\begin{equation}\label{alpha-J-zero}
		\alpha_j(\eta,\mathbf p^*(\eta))=0,
		\qquad j=0,\ldots,2n-3.
	\end{equation}
The successive relations between the normalized equations
and these coefficients are triangular, with \(n\) diagonal
factors of order \(\eta\) and \(n-3\) of order \(\eta^3\).

Since \(\kappa>1\), the factors
\(1+(-1)^j\kappa^{-j-1}\) in \eqref{alpha-A} do not vanish,
so all diagonal factors are nonzero for sufficiently
small \(\eta>0\).
Hence
	\begin{equation}\label{coefficient-rank}
		\operatorname{rank}
		\left.
		\frac{\partial(\alpha_0,\alpha_1,\alpha_3,\ldots,\alpha_{2n-3})}
		{\partial\mathbf p}
		\right|_{\mathbf p=\mathbf p^*(\eta)}
		=2n-3.
	\end{equation}
	
	We next determine \(\alpha_{2n-2}\) and
\(\alpha_{2n-1}\) along \eqref{pstar}. By \eqref{Aodd}, \eqref{alpha-A}, and \eqref{pstar},
	we obtain
	\begin{equation}\label{alpha-m}
		\alpha_{2n-2}(\eta,\mathbf p^*(\eta))=b_n\eta^3+O(\eta^4),
	\end{equation}
	where 
	\(b_n=-(n-1)\pi \sigma(\mathbf p_0)B_{n-3}(\mathbf p_0)\).
	The first-order averaged displacement gives
	\begin{equation}\label{alpha-mplus1}
		\alpha_{2n-1}(\eta,\mathbf p^*(\eta))=d_n\eta+O(\eta^2),
	\end{equation}
where \(d_n=-2c_{n-1}M_{2n-1}(\mathbf p_0)/((2n-3)(2n-1))\).

	Since \(\sigma(\mathbf p_0)<0\),
\eqref{sgn-critical-B} and \eqref{critical-M} give \(\sgn b_n=(-1)^n\) and
	\(\sgn d_n=(-1)^{n+1}\). In particular, \(b_nd_n<0\).
	
	We now construct the additional periodic orbits near infinity.
	By analyticity, \(U(u;0,\mathbf p)\equiv0\), and
	\eqref{alpha-J-zero}, along \eqref{pstar} we have
	\[
	U(u;\eta,\mathbf p^*(\eta))
	=u^{2n-2}\bigl(\alpha_{2n-2}+\alpha_{2n-1}u+\eta u^2R(u,\eta)\bigr),
	\]
	where \(R\) is analytic and the coefficients are evaluated
	at \(\mathbf p=\mathbf p^*(\eta)\). Substituting \(u=\eta^2v\) and using
	\eqref{alpha-m}--\eqref{alpha-mplus1} yields
	\[
	\eta^{-4n+1}U(\eta^2v;\eta,\mathbf p^*(\eta))
	\longrightarrow v^{2n-2}(b_n+d_nv)
	\qquad\text{as }\eta\to0^+,
	\]
	with convergence in \(C^1\) on every compact interval
	contained in \((0,\infty)\). The positive simple zero \(v_*=-b_n/d_n\)
	therefore gives a hyperbolic limit cycle whose
\(u\)-coordinate \(u_*\) satisfies
	\(u_*\sim\eta^2v_*\), and hence \(r_*=1/u_*\sim\eta^{-2}/v_*\).

	Thus for sufficiently small \(\eta>0\), this limit cycle is distinct from the \(n-2\) periodic orbits
	previously obtained in fixed bounded intervals of \(r\).
	
	Fix such an \(\eta\).  Define
\[
	Q(w)=
	\left(
	w+\frac{\displaystyle\sum_{j=1}^{2n-3}j^{-1}}
	{\displaystyle\sum_{1\le i<j\le 2n-3}(ij)^{-1}}
	\right)
	\prod_{j=1}^{2n-3}(w-j).
\]
	This monic polynomial has zero coefficient of \(w^2\) and
	the positive simple zeros \(1,\ldots,2n-3\).
    
    For another small parameter
\(\delta>0\), introduce the second scale \(u=\delta w\). Then we have
\[
	\delta^{-2n+2}U(\delta w;\eta,\mathbf p)
	=
	\sum_{j\ge0}
	\alpha_j(\eta,\mathbf p)\delta^{j-2n+2}w^j.
\]
Thus, to obtain the limiting polynomial
\(\alpha_{2n-2}(\eta,\mathbf p^*(\eta))Q(w)\), we prescribe
\[
	\alpha_j(\eta,\mathbf p)
	=
	\alpha_{2n-2}(\eta,\mathbf p^*(\eta))
	\delta^{2n-2-j}[w^j]Q(w),
	\qquad j=0,1,3,\ldots,2n-3,
\]
where \([w^j]Q(w)\) denotes the coefficient of \(w^j\) in \(Q(w)\).
The choice \([w^2]Q(w)=0\) is compatible with
\(\alpha_2\equiv0\), so no condition is needed for \(j=2\). Since the right-hand sides tend to zero as
\(\delta\to0^+\), \eqref{coefficient-rank} and the inverse function
theorem give parameters
\(\mathbf p_{\eta,\delta}\to\mathbf p^*(\eta)\)
satisfying these equations. Consequently,
	\[
	\delta^{-2n+2}U(\delta w;\eta,\mathbf p_{\eta,\delta})
	\longrightarrow
	\alpha_{2n-2}(\eta,\mathbf p^*(\eta))Q(w)
	\qquad\text{as }\delta\to0^+,
	\]
	with convergence in \(C^1\) on every compact interval
	contained in \((0,\infty)\).
	Since \(\alpha_{2n-2}(\eta,\mathbf p^*(\eta))\ne0\),
	there are \(2n-3\) further hyperbolic periodic
	orbits with \(u_j\sim\delta j\), and hence
	\(r_j\sim1/(\delta j)\). 
	
	Since \(\mathbf p_{\eta,\delta}\to\mathbf p^*(\eta)\), the previously
	constructed hyperbolic limit cycle persists, and the
	endpoint signs ensuring the \(n-2\) finite-amplitude
	periodic orbits remain unchanged. Taking \(\delta\ll\eta^2\), we have \(u_j\ll u_*\), so the new
limit cycles lie outside the hyperbolic limit cycle with \(u_*\sim\eta^2v_*\).
	
	Finally, \eqref{trig-base} gives \(e_i\ne0\) for every 	\(i\), so all slope jumps remain nonzero under the preceding 	small perturbations. Now perturb \(F\) by \(F_\varepsilon(x)=F(x)+\varepsilon x\), and let \(P_\varepsilon\) denote the corresponding return map. 
    
    Each trajectory has finitely many switching contacts per turn, since
tangencies satisfy \(\ddot x=-x\ne0\) in the original time variable and
are therefore isolated.

Write the radial equation as
\[
\frac{dr}{d\theta}=\mathcal R(\theta,r,\varepsilon)
:=-\frac{\sin\theta\,F_\varepsilon(r\sin\theta)}
{1-\cos\theta\,F_\varepsilon(r\sin\theta)/r}.
\]
Since $\mathcal R$ is continuous and each trajectory has only
finitely many switching contacts, the coefficients in the
difference-quotient equations converge away from these contacts.

On the relevant compact set, for sufficiently small
$|\varepsilon|$, the denominator in $\mathcal R$ has a uniform
positive lower bound, so its piecewise first derivatives are
uniformly bounded. Dominated convergence and Gr\"onwall's
inequality then yield the variational equations and joint
$C^1$ dependence of $P_\varepsilon(r_0)$ on
$(r_0,\varepsilon)$, including at tangencies.
Moreover, 
\[
\partial_\varepsilon\mathcal R(\theta,r,\varepsilon)
=-\frac{r\sin^2\theta}
{\bigl(1-\cos\theta\,F_\varepsilon(r\sin\theta)/r\bigr)^2},
\]
and variation of constants yields
\[
\partial_\varepsilon P_\varepsilon(r_0)
=\int_0^{2\pi}
\exp\!\left(\int_s^{2\pi}\partial_r\mathcal R\,d\theta\right)
\partial_\varepsilon\mathcal R\,ds<0.
\]
Here all derivatives are evaluated along
$r(\theta;r_0,\varepsilon)$; their values at the finitely many
switching contacts do not affect the integrals.

Thus, the implicit function theorem locally writes the zero set
of $P_\varepsilon(r)-r$ as $\varepsilon=h(r)$, and
\[
h'(r)
=-\frac{P_{h(r)}'(r)-1}
{\left.\partial_\varepsilon P_\varepsilon(r)
\right|_{\varepsilon=h(r)}}.
\]
By the one-dimensional Sard theorem, applied to a countable cover
by these local graphs, an arbitrarily small $\varepsilon$ can be
chosen as a common regular value. Hence all fixed points in the
selected intervals are simple.

The endpoint signs ensure at least one such fixed point in each
interval, while the other hyperbolic periodic orbits persist.
Since adding \(\varepsilon x\) does not affect the fold points,
\(F_\varepsilon\) still has exactly \(n\) fold points.
The corresponding system therefore admits at least
\((n-2)+1+(2n-3)=3n-4\) hyperbolic limit cycles.
Hence
\[
H_n\ge3n-4,
\]
proving the theorem.
	\hfill\(\Box\)

A numerical phase portrait is presented to illustrate
Theorem~\ref{thm:main} in the case \(n=3\).
In this case, the theorem guarantees the existence of parameter values
for which system~\eqref{sys} admits at least five limit cycles.
For the numerical illustration, consider the system
\begin{equation}\label{sys:n3}
    \dot x=y-F(x),\qquad \dot y=-x,
\end{equation}
where
\[
F(x)=
\begin{cases}
    m_1x+d_1, & x\leq 1,\\
    m_2x+d_2, & 1<x\leq 2,\\
    m_3x+d_3, & 2<x\leq 3,\\
    m_4x+d_4, & x>3,
\end{cases}
\]
with
\[
\begin{aligned}
    m_1&=0.1,
    &d_1&=0,\\
    m_2&=-0.40000434999952982325,
    &d_2&=0.50000434999952982325,\\
    m_3&=0.00000434999982673535,
    &d_3&=-0.30001304999918329395,\\
    m_4&=-0.1000000000000000293,
    &d_4&=2.97\times10^{-13}.
\end{aligned}
\]
These coefficients ensure that \(F\) is continuous at
the three switching points \(x=1,2,3\).
To display the five cycles together despite their widely differing
amplitudes, the nonlinear coordinate transformation
\(X=\operatorname{arcsinh}(x)\), \(Y=\operatorname{arcsinh}(y)\) is applied.
The resulting phase portrait is shown in Figure~\ref{tu},
where the square-like appearance of the cycles is a consequence
of this coordinate compression.
\begin{figure}[htpb]
\centering
{\includegraphics[width=0.8\textwidth]
{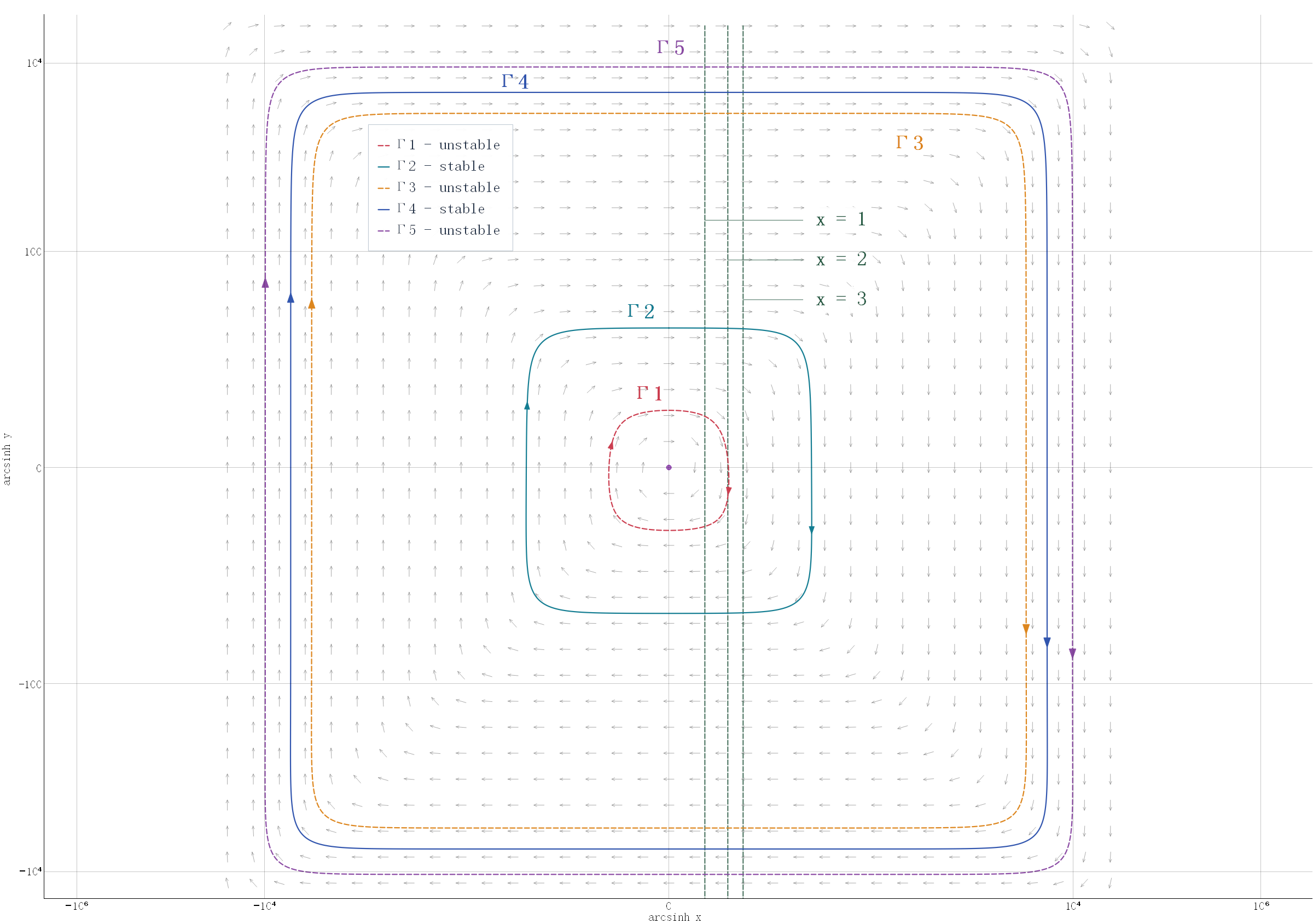}}   
\caption{Numerical phase portrait of system \eqref{sys:n3}. }
\label{tu}
\end{figure}

	\section{The upper bound}\label{sec3}
	
In this section, \(F\) is an arbitrary continuous piecewise
linear function with exactly \(n\) fold points, where \(n\ge1\).

We first bound the possible crossing patterns of periodic
orbits, then describe each pattern by a finite system of
Pfaffian equations, and finally bound its isolated real zeros.

	\begin{lemma}\label{lm:upper-geometry}
		Every  periodic orbit  of system \eqref{sys} intersects the \(y\)-axis
		exactly twice. On the orbit segments in its right and left half-planes,
		\(x(t)\) has exactly one critical point, a maximum and a minimum, respectively.
	\end{lemma}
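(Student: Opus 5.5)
The argument rests on the two equations of \eqref{sys} in the original time: \(\dot y=-x\) and \(\ddot x=\dot y-F'(x)\dot x\) wherever \(F\) is differentiable.

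\textbf{The second equation.} On the right half-plane \(x>0\) we have \(\dot y=-x<0\), so \(y\) is strictly decreasing there. On the left half-plane \(y\) is strictly increasing.

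\textbf{Two intersections with the \(y\)-axis.} Let \(\Gamma\) be a periodic orbit. First, \(\Gamma\) must meet both half-planes. If it stayed in \(\{x\ge0\}\), then \(y\) would be nonincreasing along it and strictly decreasing whenever \(x>0\), which contradicts periodicity unless \(\Gamma\) is an equilibrium. The same reasoning applies to \(\{x\le0\}\).

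Next, on the \(y\)-axis \(\dot x=y-F(0)\), so \(\Gamma\) crosses the axis transversally except at the single point \((0,F(0))\).
\begin{itemize}
\item Crossings from left to right happen where \(y>F(0)\).
\item Crossings from right to left happen where \(y<F(0)\).
\item If \(\Gamma\) passes through \((0,F(0))\), that point is an equilibrium exactly when \(F(0)=0\). Otherwise \(\ddot x=-x\ne0\) fails there only trivially, and I would use \(\dot y=-x=0\), \(\ddot x=\dot y=0\), \(\dddot x=-\dot x=0\)... The cleaner route is to note that the orbit is tangent to the axis at that point and stays locally on one side, and handle it as a degenerate case.
\end{itemize}

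Now consider an excursion of \(\Gamma\) into \(x>0\). It enters the half-plane at some \((0,y_1)\) with \(y_1\ge F(0)\) and leaves at some \((0,y_2)\) with \(y_2\le F(0)\). Since \(y\) decreases strictly along the excursion, \(y_2<y_1\). In the same way, each left excursion runs from some \(y_2\) up to a larger \(y_3\).

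Because \(\Gamma\) is a closed Jordan curve and orbits cannot cross, the successive entry points on the upper half-axis \(\{y>F(0)\}\) must be monotone. If there were more than one right excursion, the points where \(\Gamma\) hits \(\{y>F(0)\}\) would form a finite set visited cyclically and monotonically, which is impossible for a periodic orbit. Hence \(\Gamma\) meets the upper ray exactly once and the lower ray exactly once.

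The tangential point \((0,F(0))\) needs separate treatment. If \(\Gamma\) touches the axis there without crossing, then near that point \(\dot y=-x\) and the motion is locally like \(\ddot x=-x\), which forces a genuine crossing. More concretely, \(\dot x=0\) and \(\ddot x=\dot y-F'\dot x=0\), so one must look at higher derivatives. I expect this degenerate contact to be the main technical nuisance.

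\textbf{Exactly one critical point of \(x(t)\) on each arc.} Critical points of \(x(t)\) occur where \(\dot x=0\), that is, where \(\Gamma\) meets the graph \(y=F(x)\). On the right arc \(x\) starts and ends at \(0\) and is positive in between, so at least one maximum exists.

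Suppose instead that \(x(t)\) had two local maxima with a local minimum between them. Note that \(y\) is strictly decreasing along the arc, so the arc can be parametrized by \(y\) as \(x=\xi(y)\). The critical points of \(x(t)\) correspond to points where \(\xi(y)\) is extremal in \(y\). Since \(\dot x=y-F(x)\), the sign of \(\dot x\) is positive above the graph of \(F\) and negative below it.

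An interior local minimum of \(x\) at a point \((x_0,y_0)\) would require \(\dot x\) to pass from negative to positive. So the orbit goes from below the graph to above it while \(y\) decreases. At the point \((x_0,y_0=F(x_0))\) the tangent is vertical, and immediately afterward \(y<y_0\) while \(x>x_0\) near \(x_0\). Being above the graph then requires \(F(x)<y<F(x_0)\), so \(F\) decreases through \(x_0\). Symmetrically, just before the minimum, \(y>y_0\) with \(x>x_0\) and the orbit lies below the graph, which requires \(F(x)>y>F(x_0)\), so \(F\) increases. This is a contradiction. A piecewise argument using one-sided slopes handles fold points.

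The left arc is symmetric, via \((x,y)\mapsto(-x,-y)\) and \(F\mapsto -F(-x)\). The main obstacle I anticipate is making these local sign arguments rigorous at fold points and at tangential contacts, where the one-sided slopes must be used carefully.
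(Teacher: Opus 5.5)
Your proposal has a genuine gap in two places. Both of the ``technical nuisances'' you anticipate disappear once you use two simple facts. The first is that $(0,F(0))$ is \emph{always} the equilibrium of \eqref{sys}: there $\dot x=F(0)-F(0)=0$ and $\dot y=0$, whatever the value of $F(0)$. Your claim that it is an equilibrium only when $F(0)=0$ is false. Consequently the degenerate-contact case you leave unresolved is vacuous, because a periodic orbit cannot pass through an equilibrium. Every intersection with the $y$-axis is then a transversal crossing of one of the rays $\{x=0,\ y>F(0)\}$ or $\{x=0,\ y<F(0)\}$. Your monotonicity argument (successive intersections of a trajectory with a transversal section are monotone, so a periodic orbit meets each ray at most once) then does give exactly two intersections. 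That is a legitimate alternative to the paper's route. The paper instead uses the Jordan curve theorem to show that crossing directions alternate along the $y$-axis, so at most one crossing can lie on each ray. Both arguments need the equilibrium excluded first.

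The more serious gap is in the critical-point part, which only rules out an interior local minimum of $x(t)$. ``Exactly one critical point'' also requires excluding zeros of $\dot x$ at which $\dot x$ does not change sign (the orbit touching the nullcline $y=F(x)$ from one side), as well as non-strict extrema. Your one-sided-slope analysis at fold points is also left unfinished. The missing observation is that no case analysis is needed. If $\dot x(t_0)=0$, then $x(t_0+h)-x(t_0)=o(h)$, so Lipschitz continuity of $F$ gives $\bigl(F(x(t_0+h))-F(x(t_0))\bigr)/h\to0$. Hence $\dot x$ is differentiable at $t_0$ with derivative $\dot y(t_0)=-x(t_0)$, even when $x(t_0)$ is a fold point. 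On the right arc this derivative is negative, so every zero of $\dot x$ is an isolated strict crossing from positive to negative values. Between two consecutive such zeros, $\dot x$ would have to pass from negative to positive, and the intermediate value theorem would produce a further zero between them, a contradiction. So there is exactly one critical point, a strict maximum, and the left arc is symmetric. Your sign-change argument relative to the graph of $F$ is essentially an incomplete version of this one-line computation.
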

\begin{proof}
For a periodic solution of period \(T\), $\int_0^T x(t)\,dt=-\bigl[y(T)-y(0)\bigr]=0$, so its orbit enters both half-planes \({x<0}\) and \({x>0}\); otherwise $x(t)\equiv0$ and the solution would be constant.

Since \((0,F(0))\) is an  equilibrium, a  periodic orbit cannot pass through this point. 
On the \(y\)-axis, \(\dot x=y-F(0)\), so all intersections
above \((0,F(0))\) are rightward crossings, whereas all
those below it are leftward crossings.

Since the orbit is a simple closed curve, the crossing
directions alternate when the intersection points are
ordered along the \(y\)-axis, by the Jordan curve theorem.
Hence there is at most one intersection above \((0,F(0))\)
and at most one below it.
As the orbit
meets both half-planes, both crossings must occur. Therefore, it
intersects the \(y\)-axis exactly twice.

		Put $v=\dot x$. If $v(t_0)=0$, then $x(t_0+h)-x(t_0)=o(h)$.
		By the Lipschitz continuity of $F$,
		\[
		v'(t_0)=\dot y(t_0)
		-\lim_{h\to0}\frac{F(x(t_0+h))-F(x(t_0))}{h}
		=-x(t_0).
		\]
Every zero of $v$ on the right orbit segment is therefore a positive to negative crossing, so there is at most one. Two such zeros would require a crossing in the opposite direction between them. There is at least one because $x>0$ in the interior and $x=0$ at both endpoints. The left orbit segment is treated similarly. Each half-plane orbit segment therefore has exactly one critical point.   
\end{proof}

A periodic orbit is called \emph{tangent} if it is tangent
to a switching line, and \emph{transverse} otherwise.
For a transverse periodic orbit, cut it at every intersection
with a switching line or the $y$-axis, counting the line
$x=0$ only once. We call this the \emph{canonical decomposition}.
Starting at the upper intersection with the $y$-axis and
following the flow, the \emph{itinerary} records the successive
cutting lines and the affine region containing each orbit segment.

The following lemma bounds the number of tangent periodic
orbits and the number of possible itineraries of transverse
periodic orbits.
\begin{lemma}\label{lm:upper-itineraries}
		System~\eqref{sys} has at most \(n\) tangent periodic orbits.
The canonical decomposition of every transverse periodic
orbit consists of at most \(2n+2\) orbit segments, and the number of
possible itineraries is at most
\(\lfloor(n+2)^2/4\rfloor\).
	\end{lemma}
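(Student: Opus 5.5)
The plan is to derive all three claims from Lemma~\ref{lm:upper-geometry}. That lemma says each periodic orbit $\Gamma$ meets the $y$-axis exactly twice, and on its right (left) arc $x(t)$ has a single critical point, a maximum $X_R(\Gamma)>0$ (a minimum $-X_L(\Gamma)<0$). So on the right arc $x$ increases monotonically from $0$ to $X_R$ and then decreases back to $0$, and similarly on the left arc. Throughout, write $x_1<\dots<x_n$ for the fold points. Let $n_+$ and $n_-$ be the numbers of positive and negative fold points, so $n_++n_-\le n$; a fold point at $x=0$, if there is one, is counted in neither.

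\emph{Tangent orbits.} Suppose $\Gamma$ is tangent to a switching line $x=x_i$.
\begin{enumerate}[labelindent=0pt,leftmargin=1.9em]
\item[(i)] Since $\dot x=y-F(x)$ vanishes at the tangency, the tangency point is a critical point of $x(t)$. By Lemma~\ref{lm:upper-geometry} it is therefore the maximum or the minimum of $x$ along $\Gamma$, so $x_i=X_R(\Gamma)$ or $x_i=-X_L(\Gamma)$.
\item[(ii)] In particular $x_i\neq0$, because $\Gamma$ enters both open half-planes.
\item[(iii)] The tangency point is then forced to be $(x_i,F(x_i))$, the unique point of the line $x=x_i$ where $\dot x=0$.
\item[(iv)] By uniqueness of solutions (the field is Lipschitz), at most one periodic orbit passes through $(x_i,F(x_i))$.
\end{enumerate}
Each fold point therefore carries at most one tangent periodic orbit, which gives at most $n$ tangent periodic orbits.

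\emph{Segment count.} Let $\Gamma$ be transverse. Put $p=\#\{i:0<x_i<X_R(\Gamma)\}\le n_+$ and $q=\#\{i:-X_L(\Gamma)<x_i<0\}\le n_-$. Transversality excludes $x_i=X_R$ and $x_i=-X_L$. By monotonicity, the right arc crosses each of the $p$ lines exactly twice, once while $x$ increases and once while it decreases, and the left arc does the same for the $q$ lines. Together with the two $y$-axis points, where the line $x=0$ is counted only once even if it is a fold line, the closed curve is cut at $2+2p+2q$ points. It therefore splits into $2+2(p+q)\le 2n+2$ segments.

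\emph{Itineraries.} By the monotone structure, the itinerary is completely determined by the pair $(p,q)$. Starting at the upper $y$-axis point, the orbit passes through the lines $x_{(1)}<\dots<x_{(p)}$ among the positive folds in increasing order and then back in decreasing order. It then crosses the $y$-axis and does the same on the left with the $q$ nearest negative folds. The affine region of each segment is read off from these crossings. Since $(p,q)\in\{0,\dots,n_+\}\times\{0,\dots,n_-\}$, the number of itineraries is at most $(n_++1)(n_-+1)$. By AM--GM this is at most $\lfloor ((n_++n_-+2)/2)^2\rfloor\le\lfloor (n+2)^2/4\rfloor$; the floor is allowed because the product is an integer.

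The main obstacle I expect is the care needed at the degenerate points rather than any computation. One issue is to verify that a tangency can only occur at the extremum of $x$; this is exactly the content of step (i) and relies on the uniqueness part of Lemma~\ref{lm:upper-geometry}. The other is the convention for a possible fold at $x=0$, which must not be double-counted: this is what keeps the segment bound at $2n+2$ rather than $2n+3$, and it keeps $n_++n_-\le n$ in the itinerary count.
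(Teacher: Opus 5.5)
Your proposal is correct and follows essentially the same route as the paper: tangencies are pinned to $(x_i,F(x_i))$ and counted by uniqueness, and Lemma~\ref{lm:upper-geometry} yields the $2+2p+2q$ segment count and the $(n_++1)(n_-+1)\le(n+2)^2/4$ itinerary bound. The only difference is that you exclude $x_i=0$ by locating the tangency at an extremum of $x$, whereas the paper simply notes that $(0,F(0))$ is an equilibrium. That same fact is what you implicitly need in step (i) to rule out a critical point of $x$ lying on the $y$-axis.
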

	\begin{proof}
For system	\eqref{sys}, a periodic orbit tangent to \(x = x_i\) satisfies \(\dot{x} = 0\) there, so the tangency occurs at \((x_i, F(x_i))\). By uniqueness of solutions, at most one periodic
orbit can pass through this point. If \(x_i=0\), this point is \((0,F(0))\), which is an equilibrium and hence cannot belong to a
periodic orbit. Therefore, the number of tangent periodic orbits is
at most \(n\).
        
		Let \(p\) and \(q\) denote the numbers of positive and negative
fold points, respectively. By Lemma~\ref{lm:upper-geometry}, each
nonzero switching line met by a transverse periodic orbit is crossed
exactly twice. If \(j_+\) and \(j_-\) denote the numbers of crossed
positive and negative lines, respectively, then its canonical
decomposition has
\[
2+2j_++2j_-\le2+2(p+q)\le2n+2
\]
orbit segments. Turning points are not cuts, and each open orbit segment lies in one
affine region and one open half-plane.

On each side the crossed lines form an initial segment ordered from the axis outward, so $(j_+,j_-)$ determines the entire itinerary, giving at most $(p+1)(q+1)$ possibilities. Since the fold point at the origin, if
present, does not generate an additional cutting line, we have
\(p+q\le n\). Therefore,
\[
(p+1)(q+1)
\le \frac{(p+q+2)^2}{4}
\le \frac{(n+2)^2}{4}.
\]
Taking the integer part gives the stated bound.
\end{proof}

\begin{definition}
Let \(U\subset\mathbb R^d\) be open, with
\(\mathbf z=(z_1,\ldots,z_d)\). An ordered family of real analytic
functions \((\varphi_1,\ldots,\varphi_\ell)\) on \(U\) is called a
Pfaffian chain of length \(\ell\) and degree at most \(\alpha\) if
\[
	\partial_{z_i}\varphi_j
	=
	P_{ij}(\mathbf z,\varphi_1,\ldots,\varphi_j),
\]
where each \(P_{ij}\) is a polynomial of total degree at most \(\alpha\). A function $f=P(\mathbf z,\varphi_1,\ldots,\varphi_\ell)$ is Pfaffian of degree at most $\beta$ with respect to the chain
\((\varphi_1,\ldots,\varphi_\ell)\) if $P$ is a polynomial of total degree at most $\beta$. 
\end{definition}
 All chains used below are defined on the whole ambient space.
Khovanskii's theorem, in the form stated in
\cite[Theorem~3.1]{Gab}, gives an explicit bound on the number
of common zeros at which the Jacobian is nonsingular.
The following lemma extends this to all
isolated common zeros.
	\begin{lemma}\label{lm:upper-isolated}
Let \((f_1,\ldots,f_s)\) be Pfaffian functions of degree at most
\(\beta\ge1\) on \(\R^d\) with respect to the same Pfaffian chain
of length \(\ell\) and degree at most \(\alpha\ge1\). Their number of isolated common real zeros is at most
		\begin{equation}\label{upper-isolated-bound}
			2^{\ell(\ell-1)/2}D^d
			\bigl(\alpha\min\{d,\ell\}+dD-d+1\bigr)^\ell,
		\end{equation}
        where \(D=2\beta+\alpha-1\).
	\end{lemma}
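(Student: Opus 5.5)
We reduce to the nondegenerate case covered by Khovanskii's theorem \cite[Theorem~3.1]{Gab}. That result bounds the number of common zeros of \(d\) Pfaffian functions on \(\R^d\) at which the Jacobian is nonsingular by
\[
2^{\ell(\ell-1)/2}\beta_0^d\bigl(\alpha\min\{d,\ell\}+d\beta_0-d+1\bigr)^\ell
\]
in terms of the degree \(\beta_0\) of the functions. The strategy is to replace the system \((f_1,\ldots,f_s)\) by a square system of \(d\) equations of degree at most \(D=2\beta+\alpha-1\), whose nondegenerate zeros include one point near each isolated zero of the original system. The appearance of \(D\) already suggests the construction: sums of squares (degree \(2\beta\)) and their derivatives (which cost an extra \(\alpha-1\)).

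Let \(z_1,\ldots,z_N\) be distinct isolated common zeros. Put
\[
g=\sum_{k=1}^s f_k^2,
\]
which is Pfaffian of degree at most \(2\beta\) with respect to the same chain. Each \(z_j\) is then an isolated zero and a strict local minimum of \(g\). By the chain rule, each \(\partial_{z_i}g\) is a polynomial in \((\mathbf z,\varphi)\): differentiating a monomial of degree \(2\beta\) in the chain variables replaces one factor \(\varphi_m\) by \(P_{im}\), of degree at most \(\alpha\). Hence \(\deg\partial_{z_i}g\le 2\beta+\alpha-1=D\).

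Next we perturb linearly, setting \(g_c=g-\langle c,\mathbf z\rangle\) for a small \(c\in\R^d\). The gradient system becomes
\[
\partial_{z_i}g-c_i=0,\qquad i=1,\ldots,d,
\]
still of degree at most \(D\), since \(D\ge1\). Choose disjoint small closed balls \(B_j\) around the \(z_j\) with \(g>g(z_j)=0\) on \(\partial B_j\). For \(|c|\) small, \(g_c\) still attains an interior minimum in each \(B_j\). The gradient map \(\nabla g\) is real analytic, so by Sard's theorem almost every \(c\) is a regular value of \(\nabla g\). For such a \(c\), every critical point of \(g_c\) is nondegenerate, because the Jacobian of \(\nabla g-c\) is the Hessian of \(g\). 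This yields at least \(N\) nondegenerate solutions of a system of \(d\) Pfaffian equations of degree at most \(D\) with the same chain, and Khovanskii's bound with \(\beta_0=D\) gives exactly \eqref{upper-isolated-bound}.

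The main obstacle is to make the interior-minimum argument rigorous when \(\ell\) or \(d\) is degenerate. If \(N\) is infinite, we apply the argument to any finite subfamily, which bounds every such \(N\). The ball argument needs only continuity and a positive minimum of \(g\) on \(\partial B_j\); this holds because \(z_j\) is an isolated zero of \(g\) and \(g\ge0\). The other point to check is that the degree convention of \cite{Gab} matches: the bound is stated for \(d\) equations of degree at most \(\beta_0\) in \(d\) variables, and it is monotone in \(\beta_0\), so using \(D\) as an upper bound is legitimate.
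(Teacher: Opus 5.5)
Your proposal is correct and follows the paper's proof step by step. Both arguments use the sum of squares $H=\sum_i f_i^2$ and a small linear tilt $H-c\cdot\mathbf z$ that still attains an interior minimum in disjoint balls around $N$ chosen zeros; Sard's theorem makes $c$ a regular value of $\nabla H$, the degree count gives $\deg\partial_{z_j}H\le 2\beta+\alpha-1=D$, and Khovanskii's bound is then applied to the $d$ equations $\nabla H=c$, with $N$ arbitrary.
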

	
	\begin{proof}
Choose $N$ isolated common zeros $z^{(1)},\ldots,z^{(N)}$ and pairwise disjoint closed balls $B_j$ centered at $z^{(j)}$, with radii $r_j$, containing no other common zeros.
Set $H=\sum_{i=1}^s f_i^2$ and $\delta_j=\min_{\partial B_j}H>0$. For $\boldsymbol{\varepsilon} \in\mathbb R^d$ satisfying $\|\boldsymbol{\varepsilon}\|<\min_j\delta_j/(2r_j)$, we have
\[
\bigl(H(\mathbf z)-\boldsymbol{\varepsilon}\cdot \mathbf z\bigr)
-\bigl(H(z^{(j)})-\boldsymbol{\varepsilon}\cdot z^{(j)}\bigr)
\ge \delta_j-\|\boldsymbol{\varepsilon}\|r_j>0,
\qquad z\in\partial B_j.
\]
Thus, by compactness, $H-\boldsymbol{\varepsilon}\cdot \mathbf z$ attains its minimum at an interior point of each $B_j$. By Sard's theorem, choose such an $\boldsymbol{\varepsilon}$ to be a regular value of $\nabla H$.
Then
\[
\#\{\mathbf z:\nabla H(\mathbf z)=\boldsymbol{\varepsilon}\}\geq N,
\]
and every solution of \(\nabla H(\mathbf z)=\boldsymbol{\varepsilon}\) satisfies
\(
\det\nabla^2H(\mathbf z)\ne0.
\)
Since $\partial_{z_j}H=2\sum_i f_i\partial_{z_j}f_i$, each component of $\nabla H-\boldsymbol{\varepsilon}$ has degree at most $\beta+(\beta+\alpha-1)=D$ in the same chain \cite[Lemma~2.5]{Gab}.
Applying Khovanskii's bound \cite[Theorem~3.1]{Gab} to these \(d\) Pfaffian functions gives
\[
N\leq
2^{\ell(\ell-1)/2}D^d
\bigl(\alpha\min\{d,\ell\}+dD-d+1\bigr)^\ell.
\]
 Since $N$ was arbitrary, 
the same bound holds for the total number.
	\end{proof}

To apply the zero-counting bound, we first express the endpoint matching conditions for individual orbit segments as Pfaffian equations. Since $F$ is affine on each region determined by the switching lines, we consider
\begin{equation}\label{eq:upper-affine}
\dot x=y-ax-b,\qquad \dot y=-x,
\end{equation}
where $a,b\in\mathbb R$ are fixed. We consider orbit segments of system \eqref{eq:upper-affine} whose interiors lie in one of the half-planes $x>0$ or $x<0$. 

Let $(\xi_0,Y_0)$ and $(\xi_1,Y_1)$ be the endpoints of such a segment, with passage time $\tau>0$, and put $z_i=(\xi_i,Y_i-b)^{\mathsf T}$ for $i=0,1$. The endpoint matching conditions are
\begin{equation}\label{eq:upper-arc-flow}
z_1=e^{\tau A}z_0,\qquad A=\begin{pmatrix}-a&1\\-1&0\end{pmatrix}.
\end{equation}
The characteristic polynomial of $A$ is $\lambda^2+a\lambda+1$, with discriminant $a^2-4$.

With $\xi_0,\xi_1,a,b$ fixed, we seek a global Pfaffian representation of the matching conditions in $Y_0,Y_1$ and a time parameter obtained from $\tau$, with a common chain depending only on that parameter. All degrees below are total degrees in these variables and the chain functions.

\begin{lemma}\label{lm:upper-arc}
The matching conditions \eqref{eq:upper-arc-flow} admit such a Pfaffian representation, with chain length $\ell$, chain degree at most $\alpha$, and equation degree at most $\beta$ satisfying:
\begin{enumerate}[
    labelindent=0pt,
    labelwidth=1.5em,
    labelsep=0.4em,
    leftmargin=1.9em,
    align=left
]
\item[\textnormal{\textbf{(i)}}] If $a^2>4$, then $\ell\le2$, $\alpha\le1$, and $\beta\le2$.
\item[\textnormal{\textbf{(ii)}}] If $a^2=4$, then $\ell\le1$, $\alpha\le1$, and $\beta\le3$.
\item[\textnormal{\textbf{(iii)}}] If $a^2<4$, then $\ell\le2$, $\alpha\le3$, and $\beta\le4$.
\end{enumerate}
The chosen time parameter is a local analytic coordinate for $\tau$.
\end{lemma}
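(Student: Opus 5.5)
The plan is to write $e^{\tau A}$ explicitly in each spectral case. We then choose a time parameter $T$ so that the entries of $e^{\tau A}$ are polynomial in a short chain of functions of $T$, and read off the degrees. In every case $e^{\tau A}=\phi_1(\tau)I+\phi_2(\tau)A$, where $\phi_1,\phi_2$ are the standard scalar functions determined by the eigenvalues $\lambda_\pm=(-a\pm\sqrt{a^2-4})/2$. The two scalar equations in \eqref{eq:upper-arc-flow} are then
\[
\xi_1=\phi_1\xi_0+\phi_2\bigl(-a\xi_0+Y_0-b\bigr),\qquad
Y_1-b=\phi_1(Y_0-b)-\phi_2\xi_0 .
\]
These are linear in $Y_0,Y_1$ with coefficients $\phi_1,\phi_2$. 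So the equation degree is $1$ plus the degree of $\phi_1,\phi_2$ as polynomials in the chain functions and $T$. Since $\xi_0,\xi_1,a,b$ are fixed constants, they do not affect degrees.

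\textbf{Case (i), $a^2>4$.} Take $T=\tau$ and the chain $\varphi_1=e^{\lambda_-T}$, $\varphi_2=e^{\lambda_+T}$. Each satisfies $\varphi_j'=\lambda_j\varphi_j$, which is linear, so $\ell=2$ and $\alpha=1$. We have $\phi_2=(\varphi_2-\varphi_1)/(\lambda_+-\lambda_-)$ and $\phi_1=(\lambda_+\varphi_1-\lambda_-\varphi_2)/(\lambda_+-\lambda_-)$. Both are linear in the chain, so each equation has degree at most $2$. (One could even use the single chain function $e^{\lambda_+T}$ together with $e^{\lambda_-T}$ expressed through it, but two linear chain elements suffice.)

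\textbf{Case (ii), $a^2=4$.} Here $\lambda=-a/2$ is a double eigenvalue and $e^{\tau A}=e^{\lambda\tau}\bigl(I+\tau(A-\lambda I)\bigr)$. Take $T=\tau$ and the single chain function $\varphi_1=e^{\lambda T}$, with $\varphi_1'=\lambda\varphi_1$, so $\ell=1$ and $\alpha=1$. The entries are $\varphi_1$ times affine functions of $T$, hence of degree $2$. Multiplying by $Y_0$ gives degree $3$.

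\textbf{Case (iii), $a^2<4$.} This is the main obstacle: $\cos\omega\tau$ and $\sin\omega\tau$ with $\omega=\sqrt{4-a^2}/2$ do not form a global Pfaffian chain in $\tau$ in the required triangular sense without introducing topological trouble, and the count must be valid on the whole line. The plan is to use the half-angle substitution $T=\tan(\omega\tau/2)$, which is a local analytic coordinate for $\tau$ on each period window. Then $\cos\omega\tau=(1-T^2)/(1+T^2)$ and $\sin\omega\tau=2T/(1+T^2)$. Set $\varphi_1=(1+T^2)^{-1}$, which satisfies $\varphi_1'=-2T\varphi_1^2$ of degree $3$. For the exponential factor, $e^{-a\tau/2}=\exp\bigl(-(a/\omega)\arctan T\bigr)$; set $\varphi_2$ equal to this, so that $\varphi_2'=-(a/\omega)\varphi_1\varphi_2$, of degree $2$. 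This gives $\ell=2$ and $\alpha\le3$. Then $\phi_1,\phi_2$ are $\varphi_2$ times polynomials of degree at most $2$ in $T$ times $\varphi_1$. For example, $\phi_2=\varphi_2\cdot 2T\varphi_1/\omega$ and $\phi_1=\varphi_2\bigl((1-T^2)\varphi_1+(a/\omega)T\varphi_1\bigr)$, so each has degree at most $4$. To keep $\beta\le4$, rewrite $T^2\varphi_1=1-\varphi_1$, which lowers the degree of $\phi_1$ to $3$; the equations then have degree at most $4$. The chain is defined on all of $\mathbb R$.

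It remains to check that $T$ is a local analytic coordinate for $\tau$ in each case. This is trivial in (i) and (ii). In (iii), $dT/d\tau=\tfrac{\omega}{2}(1+T^2)\ne0$, with the remark that in a half-plane the passage time satisfies $\omega\tau<2\pi$; the windows where $\omega\tau/2$ crosses $\pi/2$ are handled by a shift of the tangent argument if needed.
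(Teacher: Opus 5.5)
Your cases (i) and (ii) match the paper. Your degree bookkeeping in (iii) is also sound. The paper clears the denominator $1+u^2$; you instead keep $\varphi_1=(1+T^2)^{-1}$ in the equations and use $T^2\varphi_1=1-\varphi_1$. This still gives $\ell=2$, $\alpha\le3$, $\beta\le4$.

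The gap is your choice of time parameter in (iii). The coordinate $T=\tan(\omega\tau/2)$ only covers $\omega\tau\in(-\pi,\pi)$. The passage times that actually occur satisfy $0<\omega\tau\le\pi$, and equality is attained:
\begin{itemize}
\item The affine system has its equilibrium at $(0,b)$, so $x(t)$ solves $x''+ax'+x=0$.
\item Hence $x(t)$ is $e^{-at/2}$ times a sinusoid whose zeros are exactly $\pi/\omega$ apart.
\item A segment running from the $y$-axis back to the $y$-axis inside one region with $a^2<4$ therefore has $\omega\tau=\pi$ exactly. This happens whenever a periodic orbit crosses no switching line in that half-plane.
\end{itemize}
For such segments there is no real $T$. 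Your Pfaffian system then does not contain the corresponding periodic orbits, and $T$ is not a local coordinate for $\tau$ there. This is the most common configuration, not a marginal one.

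Your closing remark does not repair this. The bound $\omega\tau<2\pi$ is asserted without proof and is not the bound that matters. The phrase ``a shift of the tangent argument if needed'' leaves the fix unspecified. If different windows were handled by different shifts, you would no longer have a single Pfaffian system per segment, and the zero count in the proof of Theorem~\ref{thm:upper} would not apply as stated.

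The missing step has two parts:
\begin{itemize}
\item Prove $0<\omega\tau\le\pi$: the interior of the segment contains no zero of $x$, and the zeros of $x$ are $\pi/\omega$ apart.
\item Use one fixed shifted coordinate whose window contains $(0,\pi]$. For example, the paper's $u=\tan\bigl((\omega\tau-\pi/2)/2\bigr)$ maps $(0,\pi]$ into $(-1,1]$ and gives $\cos\omega\tau=-2u/(1+u^2)$, $\sin\omega\tau=(1-u^2)/(1+u^2)$, and $e^{-a\tau/2}=e^{-a\pi/(4\omega)}\exp\bigl(-(a/\omega)\arctan u\bigr)$.
\end{itemize}
Your degree count then goes through unchanged.

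Separately, the parenthetical remark in (i) is wrong: $e^{\lambda_-T}$ is in general not a polynomial in $T$ and $e^{\lambda_+T}$. You do not use it, so it does no harm.
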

	
	\begin{proof}
If $a^2>4$, the entries of $e^{\tau A}$ are linear combinations of $e^{\lambda_+\tau}$ and $e^{\lambda_-\tau}$, where $\lambda_\pm=(-a\pm\sqrt{a^2-4})/2$. These two exponentials form a global Pfaffian chain of length two and degree one, and the matching equations \eqref{eq:upper-arc-flow} have degree at most two.

If $a^2=4$, then $e^{\tau A}=e^{-a\tau/2}[I+\tau(A+(a/2)I)]$. The single exponential $e^{-a\tau/2}$ forms a global Pfaffian chain of degree one, and the matching equations \eqref{eq:upper-arc-flow} have degree at most three.
		
If $a^2<4$, then
\[
e^{\tau A}=e^{-a\tau/2}
\left(\cos(\omega\tau)I+\frac{\sin(\omega\tau)}{\omega}B\right),
\]
where $\omega=\sqrt{1-a^2/4}$ and $B=A+(a/2)I$.

Let $x(t)$ denote the horizontal coordinate along the orbit segment, parametrized by $0\le t\le\tau$.
Since $x''+ax'+x=0$, the function $x(t)$ is a nonzero sinusoid multiplied by $e^{-at/2}$, and its consecutive zeros are $\pi/\omega$ apart. By assumption, the interior of the orbit segment lies in one open half-plane, so \(x(t)\) has no interior zero. Hence $0<\omega\tau\le\pi$. Introduce $u=\tan((\omega\tau-\pi/2)/2)$ and
\[
h(u)=\frac{1}{1+u^2},\qquad
E(u)=e^{-(a/\omega)\arctan u}.
\]
For the actual orbit segments, \(-1<u\le1\). The functions $(h,E)$ form a global Pfaffian chain on $\mathbb R$ of degree three, since $h'=-2uh^2$ and $E'=-(a/\omega)hE$. After the time reparametrization $u=\tan((\omega\tau-\pi/2)/2)$ and multiplication by $1+u^2$, the matching conditions \eqref{eq:upper-arc-flow} become
\[
(1+u^2)z_1
=e^{-a\pi/(4\omega)}E(u)
\left(-2uI+\frac{1-u^2}{\omega}B\right)z_0.
\]
These two scalar matching equations have total degree at most four, with possible degree-four terms of the form $u^2E(u)Y_0$. Since $1+u^2>0$ on $\mathbb R$, clearing the denominator preserves the real zero set.

Finally, $\tau(u)=(\pi/2+2\arctan u)/\omega$ is analytic with ${\rm d}\tau/{\rm d}u=2/[\omega(1+u^2)]>0$, so $u$ is a local analytic coordinate for $\tau$. In the real-eigenvalue cases, the chosen time parameter is $\tau$ itself. In all three cases, the chain functions depend only on the chosen time parameter, so each chain extends globally to $\mathbb R^3$ by independence of $Y_0,Y_1$.
	\end{proof}

Each orbit segment in the canonical decomposition of a transverse periodic orbit lies in one affine region, with its interior contained in one open half-plane, so Lemma~\ref{lm:upper-arc} applies. 

Fix system~\eqref{sys} and an itinerary from Lemma~\ref{lm:upper-itineraries}, consisting of $k$ orbit segments. Starting at the upper intersection with the $y$-axis and following the flow, denote the successive vertices by 
\[P_j=(\xi_j,Y_j), \qquad j=1,\ldots,k,\]
which satisfy the cyclic condition \(P_{k+1}=P_1\).
The abscissae $\xi_j$ and the affine coefficients along the itinerary are fixed, leaving only the ordinates $Y_j$ and the time parameters as unknowns. 

Applying the construction in the proof of Lemma~\ref{lm:upper-arc} to each orbit segment gives the endpoint matching system
\begin{equation}\label{match}
 \mathcal E_j(Y_j,Y_{j+1},s_j)=0,
\qquad j=1,\ldots,k,\qquad Y_{k+1}=Y_1,   
\end{equation}
where the two components of \(\mathcal E_j\) are
the left-hand sides of the matching equations
constructed in the proof of
Lemma~\ref{lm:upper-arc}, and \(s_j\) is the chosen
time parameter. Adjacent orbit segments
share the same actual vertex ordinate, despite their different
translations.

Since the chain functions for the $j$th orbit segment depend only on $s_j$, concatenating
these chains gives a global Pfaffian representation in
$(Y_1,\ldots,Y_k,s_1,\ldots,s_k)\in\mathbb R^{2k}$.
By Lemmas~\ref{lm:upper-itineraries} and~\ref{lm:upper-arc}, the resulting Pfaffian system may be taken with
\begin{equation}\label{eq:upper-matching-format}
d=2k\le4(n+1),\qquad
\ell\le2k\le4(n+1),\qquad
\alpha=3,\qquad\beta=4.
\end{equation}
Positive passage times and the prescribed itinerary are not
imposed as additional constraints. The following lemma
justifies counting isolated real zeros of  system \eqref{match}.

\begin{lemma}\label{lm:upper-isolation}
For a fixed itinerary associated with the canonical decomposition, the corresponding transverse limit cycles of system~\eqref{sys} determine distinct isolated real zeros of  system \eqref{match}.
\end{lemma}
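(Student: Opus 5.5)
Given a transverse limit cycle $\Gamma$ with the fixed itinerary, its vertices $P_j=(\xi_j,Y_j)$ and the time parameters $s_j$ of its orbit segments (obtained through Lemma~\ref{lm:upper-arc}) define a point $\zeta_\Gamma=(Y_1,\ldots,Y_k,s_1,\ldots,s_k)$ that solves \eqref{match}. The plan has two parts. First, the map $\Gamma\mapsto\zeta_\Gamma$ is injective. Second, each $\zeta_\Gamma$ is an isolated zero of \eqref{match}.

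\textbf{Injectivity.} By Lemma~\ref{lm:upper-geometry}, $\Gamma$ meets the positive part of the $y$-axis above $(0,F(0))$ exactly once. So $Y_1$ is its unique upper crossing ordinate, and by uniqueness of solutions $Y_1$ determines $\Gamma$. Distinct cycles therefore give distinct $Y_1$, hence distinct points of $\mathbb R^{2k}$.

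\textbf{Isolation.} This is the main step. Suppose $\zeta^{(m)}\to\zeta_\Gamma$ is a sequence of zeros of \eqref{match}, all different from $\zeta_\Gamma$. Each $s_j$ is a local analytic coordinate for $\tau_j$, so near $\zeta_\Gamma$ every zero gives actual times $\tau_j^{(m)}$ close to $\tau_j>0$. For each $j$, the equation $\mathcal E_j=0$ says exactly that the affine flow of the $j$th region carries $(\xi_j,Y_j^{(m)})$ to $(\xi_{j+1},Y_{j+1}^{(m)})$ in time $\tau_j^{(m)}$. Multiplying by $1+u^2>0$ does not change this equivalence.

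We must check that these affine arcs are genuine arcs of \eqref{sys}. Because $\Gamma$ is transverse, each of its segments crosses the switching lines and the $y$-axis transversally at its endpoints and stays strictly inside its open strip in between. Continuous dependence on initial data and time then shows that, for $m$ large, the perturbed affine arcs still lie in the same regions, with interiors in the open strips. They are therefore orbit arcs of \eqref{sys} itself. Concatenated, they give a closed orbit of \eqref{sys} through $(0,Y_1^{(m)})$.

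A closed orbit here is periodic, so by injectivity $Y_1^{(m)}\ne Y_1$ (otherwise the whole point would coincide with $\zeta_\Gamma$). We thus obtain periodic orbits accumulating on $\Gamma$, contradicting the fact that a limit cycle is isolated among periodic orbits. One technical point: $Y_1^{(m)}\to Y_1$ must yield orbits distinct from $\Gamma$. This follows because the upper crossing ordinates differ.

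\textbf{Main obstacle.} The hardest part is the persistence step. It needs uniform transversality, meaning $\dot x=y-F(\xi_j)\neq0$ at every vertex other than turning points, which are not cuts. It also needs strict separation of the interior of each segment from the boundaries of its strip, so that nearby affine solutions with perturbed times do not leave the region or hit an extra line before time $\tau_j^{(m)}$. I would handle this with a compactness argument on each closed segment, combined with the implicit function theorem applied to the transversal crossing times.
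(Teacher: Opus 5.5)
Your proposal is correct and follows essentially the same route as the paper. Injectivity comes from the unique upper $y$-axis crossing together with uniqueness of solutions, and isolation comes from showing that zeros of \eqref{match} near $\zeta_\Gamma$ give genuine orbit segments with the same itinerary (compactness away from the endpoints, transversality at them); their concatenations are periodic orbits distinct from $\Gamma$ accumulating on it, which contradicts $\Gamma$ being a limit cycle. The step ``$Y_1^{(m)}=Y_1$ forces $\zeta^{(m)}=\zeta_\Gamma$'' does not follow from injectivity of $\Gamma\mapsto\zeta_\Gamma$; it holds because each segment endpoint is the first hitting of the next cut line, so uniqueness of solutions fixes every $\tau_j$ and $Y_j$, and hence every $s_j$. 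This is what the paper means by uniqueness of the representation.
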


\begin{proof}
Each transverse limit cycle determines a zero of system
\eqref{match}. The prescribed starting point and cyclic
ordering, together with the flow parametrizations in
Lemma~\ref{lm:upper-arc}, make this representation unique.
Hence distinct cycles determine
distinct zeros by uniqueness of solutions.

Let \(\mathbf z^*\in\mathbb R^{2k}\) be the zero of system \eqref{match}
corresponding to a transverse limit cycle \(\gamma\), with
vertices \((\xi_j,Y_j^*)\) and passage times \(\tau_j^*>0\).
Let $X_j(t;Y)$ denote the horizontal coordinate of the
$j$-th affine flow starting at $(\xi_j,Y)$. Since \(\gamma\) is transverse and \(F\) is continuous,
\[
\partial_t X_j(\tau_j^*;Y_j^*)
=Y_{j+1}^*-F(\xi_{j+1})\neq0.
\]
By the implicit function theorem, the equation $X_j(\tau;Y)=\xi_{j+1}$ uniquely determines a passage time $\tau$ near $\tau_j^*$ for each $Y$ near $Y_j^*$. By Lemma~\ref{lm:upper-arc}, this also determines a unique nearby value of $s_j$. After shrinking the neighborhood, the passage time remains positive.

After removing short time intervals near the endpoints, the remaining portion of each reference orbit segment is compact and has positive distance from all cutting lines. Continuous dependence preserves this separation, while the transversality of $\gamma$ preserves the prescribed crossing directions near the endpoints. Hence every sufficiently nearby zero of \eqref{match} gives orbit segments with the same itinerary. 

Their endpoints agree by \eqref{match}, so continuity of $F$ makes their concatenation a periodic solution of system~\eqref{sys}, which is nonconstant because its initial crossing of the $y$-axis remains transverse. These periodic orbits converge uniformly to $\gamma$ as the corresponding zeros approach $\mathbf z^*$, after parametrizing each orbit segment on $[0,1]$.

If $\mathbf z^*$ were not isolated in $\mathbb R^{2k}$, there would
exist zeros $\mathbf z_m\neq \mathbf z^*$ with $\mathbf z_m\to \mathbf z^*$. The corresponding periodic orbits are distinct from \(\gamma\),
since the zero of \eqref{match} corresponding to \(\gamma\) is unique.
They also converge to \(\gamma\), which contradicts the fact that
\(\gamma\) is a limit cycle.
Thus $\mathbf z^*$ is an isolated real zero.

\end{proof}	
\begin{proof}[\bf{Proof of Theorem~\ref{thm:upper}}]
Fix $n\ge1$ and an admissible function $F$ with exactly $n$ fold points. Let $N(F)$ denote the number of limit cycles of
system~\eqref{sys} with this choice of $F$,
initially allowing $N(F)=+\infty$.

For each itinerary, Lemma~\ref{lm:upper-isolation} assigns distinct transverse limit cycles to distinct isolated real zeros of the matching system. 
Applying the bound \eqref{upper-isolated-bound} in
Lemma~\ref{lm:upper-isolated} with the parameters in
\eqref{eq:upper-matching-format}, we have $D=10$,
and the number of isolated real zeros is at most
\[
		2^{\ell(\ell-1)/2}10^d
		\bigl(3\min\{d,\ell\}+9d+1\bigr)^\ell\leq Q(n) =2^{8(n+1)^2-2(n+1)}
10^{4(n+1)}(48n+49)^{4(n+1)}.
		\]
Summing over the itineraries and adding the tangent limit cycles, Lemma \ref{lm:upper-itineraries} yields
\begin{equation}\label{eq:upper-total}
N(F)\le n+
\left\lfloor\frac{(n+2)^2}{4}\right\rfloor Q(n).
\end{equation}
Taking the supremum over all admissible \(F\) in
\eqref{eq:upper-total}, we obtain
\[
H_n\le
n+\left\lfloor\frac{(n+2)^2}{4}\right\rfloor Q(n).
\]
Using $10(48n+49)<512(n+1)$ and $\log_2(n+1)\le n$,
the preceding bound yields
\[
H_n\le2(n+1)^2Q(n)
<2^{12n^2+56n+43}
<2^{28(n+1)^2}.
\]
\end{proof}

\appendix
\appendixsectionformat
\renewcommand{\thesection}{\Alph{section}}
\section{}
\label{app:node-polynomials}

This appendix constructs the node polynomials used in
Appendices~\ref{app:fourier-calculations}
and~\ref{app:constrained-derivatives}.
Throughout, the parameters are fixed at \eqref{trig-base}.
Recall that
\[
\vartheta=\frac{\pi}{2n},
\qquad
h=\frac{\pi}{4n},
\qquad
s=\sin\frac{\pi}{4n},
\qquad
c=\cos\frac{\pi}{4n},
\qquad
q=\cos\frac{\pi}{2n},
\qquad
A=a_n=c.
\]
Put \(\sigma_i=(-1)^{i-1}\).

For each integer \(r\ge0\), define
\[
J_{r,D}(x)=\int_0^x t^rD(t)\,\dd t,
\qquad
J_{r,S}(x)=\int_0^x t^rS(t)\,\dd t.
\]
For \(0\le r\le2n-6\), there are polynomials
\(R_{r,D}\) and \(R_{r,S}\), of degree at most \(r+3\)
and of the same parity as \(r+3\), such that
\begin{equation}\label{node-antiderivative}
	J_{r,D}(a_i)=R_{r,D}(a_i)+C_{r,D},
	\qquad
	J_{r,S}(a_i)=\sigma_iR_{r,S}(a_i)+C_{r,S}.
\end{equation}

Indeed, with
\(a_\pm=\sin(\tau\pm h)\) and
\(\Delta_\nu=a_+^\nu-a_-^\nu\),
the required polynomials are obtained from
\[
R_{r,D}(a_+)-R_{r,D}(a_-)
=
\frac{\sin^2\tau}{s(r+1)}\Delta_{r+1}
-\frac{2c\sin\tau}{s(r+2)}\Delta_{r+2},
\]
and
\[
R_{r,S}(a_+)+R_{r,S}(a_-)
=
2\cos\tau
\left(
\frac{\Delta_{r+2}}{r+2}
-\frac{c\sin\tau}{q(r+1)}\Delta_{r+1}
\right).
\]
For the nonzero frequencies \(1\le\nu\le r+3\le2n-3\),
the difference and sum multipliers are
\(2\mathrm{i}\sin(\nu h)\) and \(2\cos(\nu h)\),
respectively, and are nonzero.
The difference equation has zero constant Fourier coefficient
on its right-hand side, so the constant term of \(R_{r,D}\),
when present, may be chosen freely.

Taking \(\tau=j\vartheta\) and using \eqref{SDj} gives
\eqref{node-antiderivative}.
Taking \(\tau=\pi/2\) yields \(R_{r,S}(A)=0\).
For odd \(r\), taking \(\tau=0\) further gives
\(C_{r,S}=0\).

Write \(R_r=R_{r,D}+R_{r,S}\) and \(d=r+3\).
Comparison of the highest Fourier coefficients gives
\[
[x^d]R_{r,D}
=
\frac{
	\sin((d-2)h)/(d-2)
	-2c\sin((d-1)h)/(d-1)
}{
	s\sin(dh)
},
\]
and
\[
[x^d]R_{r,S}
=
\frac{
	2\bigl(
	(c/q)\sin((d-2)h)/(d-2)
	-\sin((d-1)h)/(d-1)
	\bigr)
}{
	\cos(dh)
}.
\]
Adding the two expressions and simplifying yields
\begin{equation}\label{app:R-leading}
	[x^{r+3}]R_r=L(r+3),
	\qquad
	L(x):=\frac{F_*(x)}{x},
\end{equation}
where \(F_*\) is defined in the proof of
Lemma~\ref{lm:rank}.

\section{}
\label{app:fourier-calculations}

This appendix supplies the integral and Fourier calculations
used in the proof of Lemma~\ref{lm:degeneracy}.
Throughout, the parameters are fixed at \eqref{trig-base}.
We retain the notation of that proof and recall that
\[
\vartheta=\frac{\pi}{2n},
\qquad
h=\frac{\pi}{4n},
\qquad
s=\sin\frac{\pi}{4n},
\qquad
q=\cos\frac{\pi}{2n}.
\]
We first establish the Fourier degree bound for
\(N_{m\ell,j}\), then compute the local integral involving
\(S D\), and finally derive the critical-order expression
for \(N_{m\ell}\).

We begin with the interval representations of
\(I_{\ell,S}\) and \(I_{\ell,D}\).
Put \(r=2\ell+1\) and \(d=2\ell+4\).
We use \(v\) as a continuous Fourier variable, evaluated
at \(v=j\vartheta\) on \(I_j\). Define
\[
\begin{aligned}
	U_S(x,v)
	&=
	2\cos v
	\left(
	\frac{x^{d-1}}{d-1}
	-\frac{\cos h\sin v}{q(d-2)}x^{d-2}
	\right),\\
	U_D(x,v)
	&=
	\frac1s
	\left(
	\frac{\sin^2v}{d-2}x^{d-2}
	-\frac{2\cos h\sin v}{d-1}x^{d-1}
	\right).
\end{aligned}
\]
On \(I_j\), integration of \eqref{SDj} gives
\[
I_{\ell,S}(x)=(-1)^j\bigl(U_S(x,v)+\mathcal C_\ell(v)\bigr),
\qquad
I_{\ell,D}(x)=U_D(x,v)+\mathcal D_\ell(v).
\]
For \(j\ge1\), evaluation at \(a_j=\sin(v-h)\), using
\eqref{node-antiderivative} from
Appendix~\ref{app:node-polynomials}, yields
\[
\begin{aligned}
	\mathcal C_\ell(v)
	&=
	-R_{r,S}(\sin(v-h))
	-U_S(\sin(v-h),v),\\
	\mathcal D_\ell(v)
	&=
	R_{r,D}(\sin(v-h))+C_{r,D}
	-U_D(\sin(v-h),v).
\end{aligned}
\]
The same representations hold on the remaining intervals
by symmetry. Since \(R_{r,S}\) and \(R_{r,D}\) are even
polynomials of degree at most \(d\), both
\(\mathcal C_\ell\) and \(\mathcal D_\ell\) are
\(\pi\)-periodic trigonometric polynomials of Fourier
degree at most \(d\).
Consequently, after extracting \((-1)^j\) from
\(I_{\ell,S}\), every term \(x^pT(v)\) satisfies
\[
p+\deg_{\rm F}T\le d=2\ell+4.
\]
Here the Fourier degree is understood after factoring out
\((-1)^j\). Since \(m+\ell=k-1\), it follows that
\begin{equation}\label{app:N-fourier-degree}
\deg_{\rm F}N_{m\ell,j}
\le2(2\ell+4)+2(m-\ell)
=2k+6.
\end{equation}

For the local term, integration of \eqref{SDj} over the
successive intervals gives, for every positive odd integer \(t\),
\[
\int_0^{a_n}x^t S(x)D(x)\,\dd x
=
\frac{s(1+2q)}{q(t+1)(t+2)}M_{t+2}
-
\left(
\frac{1}{sq(t+1)(t+2)}
-
\frac{4q}{s(t+1)(t+2)(t+3)}
\right)M_{t+4}.
\]
Taking \(t=2n-5\) and using \(M_{2n-3}=0\), we obtain
\begin{equation}\label{A-critical-Q}
-Q_{n-3}
=
\frac{n-1-2q^2}
{sq(n-1)(2n-4)(2n-3)}M_{2n-1}.
\end{equation}

We now assume \(m>\ell\) and \(m+\ell=n-4\).
Let \(f_S,f_D,H_S,H_D\) be as defined in the proof of
Lemma~\ref{lm:degeneracy}.
To identify the highest Fourier coefficients of
\(I_{\ell,S}\) and \(I_{\ell,D}\), set
\[
\mathcal A_S(v,t)
=
U_S(\sin(v+t),v)+\mathcal C_\ell(v),
\qquad
\mathcal A_D(v,t)
=
U_D(\sin(v+t),v)+\mathcal D_\ell(v).
\]
Substitution of the leading coefficients of
\(R_{r,S}\) and \(R_{r,D}\), computed in
Appendix~\ref{app:node-polynomials}, gives
\begin{equation}\label{antiderivative-top-frequency}
	[e^{idv}]\mathcal A_S(v,t)
	=
	\frac{f_S(e^{it})}{(2i)^{d-1}},
	\qquad
	[e^{idv}]\mathcal A_D(v,t)
	=
	-\frac{f_D(e^{it})}{s(2i)^d},
\end{equation}
where \([e^{idv}]\) denotes the coefficient of
\(e^{idv}\) in the Fourier expansion with respect to \(v\).

Since \(2d+2m-2\ell=2n\), we have
\[
[e^{i(2m-2\ell)v}]
\bigl(\sin^{2m-2\ell-1}(v+t)\cos(v+t)\bigr)
=
\frac{e^{i(2m-2\ell)t}}{2(2i)^{2m-2\ell-1}}.
\]
Thus, under \(x=\sin(v+t)\), the coefficient of
\(e^{2niv}\) in the transformed integrand is, by
\eqref{antiderivative-top-frequency},
\[
\begin{aligned}
	\frac{(-1)^jf_S(e^{it})}{(2i)^{d-1}}
	\left(-\frac{f_D(e^{it})}{s(2i)^d}\right)
	\frac{e^{i(2m-2\ell)t}}{2(2i)^{2m-2\ell-1}}=
	\frac{(-1)^{j+n}}{s\,2^{2n-1}}
	e^{2nit}H_S(t)H_D(t).
\end{aligned}
\]
Adding the conjugate negative-frequency term and
integrating over \([-h,h]\), the contribution of the
frequencies \(\pm2n\) to the integral over \(I_j\) is
\[
\frac{(-1)^{j+n}}{s\,2^{2n-2}}
\operatorname{Re}\left(
e^{2nij\vartheta}
\int_{-h}^{h}
e^{2nit}H_S(t)H_D(t)\,\dd t
\right).
\]
All lower-frequency contributions vanish upon summation
by \eqref{fourier-cancellation}. Since
\(e^{2nij\vartheta}=e^{i\pi j}=(-1)^j\)
and
\[
\operatorname{Re}\int_{-h}^{h}
e^{2nit}H_S(t)H_D(t)\,\dd t=2\mathcal I,
\]
summing over the \(2n\) intervals and using
\(2N_{m\ell}=\sum_jN_{m\ell,j}\), we obtain
\begin{equation}\label{app:critical-N}
N_{m\ell}
=
(-1)^n\frac{2n}{s\,2^{2n-2}}\mathcal I.
\end{equation}

\section{}
\label{app:constrained-derivatives}

This appendix supplies the calculations used in the proof of
Lemma~\ref{lm:rank}. All parameter derivatives are evaluated at the
parameter values given in \eqref{trig-base}.
We first compute partial derivatives with \(\mathbf e\)
and \(\mathbf a\) independent, and then use
\eqref{moment-condition} to express the variations in
\(\mathbf e\) in terms of those in \(\mathbf a\).
This gives \(D_i\), the derivative with respect to
\(a_i\) along \(\mathcal M\).
Recall that
\[
h=\frac{\vartheta}{2},\qquad
s=\sin h,\qquad
c=\cos h,\qquad
q=\cos\vartheta,\qquad
A=a_n=c,
\]
and put \(\sigma_i=(-1)^{i-1}\).

We also use the differentiated defining equations of
\(\mathcal M\). For every fixed
\(\Lambda\in\operatorname{span}\{1,x,x^3,\ldots,x^{2n-3}\}\),
they give
\begin{equation}\label{app:moment-cancellation}
	\sum_i\Lambda(a_i)\,\delta e_i
	=
	-\sum_i e_i\Lambda'(a_i)\,\delta a_i.
\end{equation}

We first consider \(Q_k\), \(0\le k\le n-4\). Recall that
\[
Q_k=\int_0^A x^{2k+1}S(x)D(x)\,\dd x.
\]
Differentiation with independent parameters gives
\[
\partial_{e_i}Q_k
=
\int_{a_i}^A
x^{2k+1}(x-a_i)(D+\sigma_iS)\,\dd x,
\qquad
\frac{\partial_{a_i}Q_k}{e_i}
=
-\int_{a_i}^A
x^{2k+1}(D+\sigma_iS)\,\dd x.
\]
The moving-endpoint term vanishes since
\(S(A)=(2+M_0)A-M_1=0\).
Substituting \eqref{node-antiderivative} and using
\eqref{app:moment-cancellation} gives
\[
	\delta Q_k
	=
	\sum_i e_i
	\left(
	R_{2k+2}'(a_i)-a_iR_{2k+1}'(a_i)
	\right)\delta a_i.
\]
The polynomial in parentheses is even and has degree at most
\(2k+4\). Hence
\begin{equation}\label{app:Q-gradient}
D_iQ_k=e_iP_Q(a_i^2),
\qquad
\deg P_Q\le k+2.
\end{equation}
By \eqref{app:R-leading}, the coefficient of
\(z^{k+2}\) in \(P_Q(z)\) is
\begin{equation}\label{app:kappa-Q}
	\kappa_Q
	:=[z^{k+2}]P_Q(z)
	=
	F_*(2k+5)-F_*(2k+4).
\end{equation}

We next consider \(N_{m\ell}\), where
\(m+\ell=k-1\) and \(m>\ell\). Put \(r=2\ell+1\) and \(p=2(m-\ell)-1\), so that
\[
N_{m\ell}
=
\int_0^A x^pJ_{r,S}(x)J_{r,D}(x)\,\dd x.
\]
Define
\[
K_E(t)=\int_t^A x^pJ_{r,E}(x)\,\dd x,
\qquad
H_{z,E}(a)=\int_0^a t^zK_E(t)\,\dd t,
\qquad E\in\{D,S\}.
\]
Differentiation and interchange of the order of integration give
\[
\partial_{e_i}N_{m\ell}
=
\int_{a_i}^A
t^r(t-a_i)(K_D+\sigma_iK_S)\,\dd t,
\qquad
\frac{\partial_{a_i}N_{m\ell}}{e_i}
=
-\int_{a_i}^A
t^r(K_D+\sigma_iK_S)\,\dd t.
\]
The moving-endpoint term vanishes because
\(J_{r,S}(A)=0\). Moreover, the factor \(2\) from differentiating
\(I_{\ell,R}^2-I_{\ell,L}^2\) is already contained here, since
\(J_{r,D}+\sigma_iJ_{r,S}\) equals \(2I_{\ell,R}\) or
\(-2I_{\ell,L}\), according to the side.

Two integrations by parts give
\begin{equation}\label{app:H-identity}
	\begin{aligned}
		H_{z,E}(a)
		=
		\frac{K_E(0)a^{z+1}}{z+1}
		-\frac{a^{z+p+2}J_{r,E}(a)}
		{(p+1)(z+p+2)}
		+
		\frac{a^{z+1}J_{r+p+1,E}(a)}
		{(p+1)(z+1)}
		-\frac{J_{r+z+p+2,E}(a)}
		{(z+p+2)(z+1)}.
	\end{aligned}
\end{equation}
Indeed,
\[
K_E(t)
=
K_E(0)
-\frac{t^{p+1}J_{r,E}(t)}{p+1}
+\frac{J_{r+p+1,E}(t)}{p+1}.
\]

Substituting \eqref{node-antiderivative} into
\eqref{app:H-identity}, for \(z=r,r+1\), gives
\[
H_{z,D}(a_i)
=\widehat R_{z,D}(a_i)+\widehat C_{z,D},
\qquad
H_{z,S}(a_i)
=\sigma_i\widehat R_{z,S}(a_i)+\widehat C_{z,S}.
\]
Since \(r\) and \(r+p+1\) are odd, the node identities give
\[
C_{r,S}=C_{r+p+1,S}=0,
\qquad
J_{r,S}(A)=J_{r+p+1,S}(A)=0,
\]
and hence \(K_S(0)=0\). Thus \eqref{app:H-identity} yields
\[
\widehat R_{r,S}(A)=\widehat R_{r+1,S}(A)=0,
\qquad
\widehat C_{r,S}
=-\frac{C_{2r+p+2,S}}{(r+p+2)(r+1)}=0.
\]
Writing \(\widehat R_z=\widehat R_{z,D}+\widehat R_{z,S}\),
we find that \(\widehat R_r\) and \(\widehat R_{r+1}\)
are respectively even and odd, with degrees at most
\(2k+4\) and \(2k+5\).

Set \(\widehat b_j=\widehat R_{r+j,D}(A)\), \(j=0,1\).
The partial derivative formulas become
\[
\partial_{e_i}N_{m\ell}=\widehat\Lambda(a_i),
\qquad
\frac{\partial_{a_i}N_{m\ell}}{e_i}
=-\widehat b_0+\widehat R_r(a_i),
\]
where
\(\widehat\Lambda(x)
=\widehat b_1-x\widehat b_0
-\widehat R_{r+1}(x)+x\widehat R_r(x)\).
This is a constant plus an odd polynomial of degree
at most \(2k+5\le2n-3\). Hence
\eqref{app:moment-cancellation} gives
\[
\delta N_{m\ell}
=\sum_i e_i
\bigl(\widehat R_{r+1}'(a_i)-a_i\widehat R_r'(a_i)\bigr)
\delta a_i,
\]
and therefore
\begin{equation}\label{app:N-gradient}
D_iN_{m\ell}=e_iP_N(a_i^2),
\qquad
\deg P_N\le k+2.
\end{equation}

It remains to compute the coefficient of degree \(k+2\)
in \(P_N\). By \eqref{app:H-identity} and
\eqref{app:R-leading},
\[
\begin{aligned}
[x^{r+z+p+5}]\widehat R_z
&=
-\frac{L(r+3)}{(p+1)(z+p+2)}
+\frac{L(r+p+4)}{(p+1)(z+1)}
-\frac{L(r+z+p+5)}{(z+p+2)(z+1)}\\
&=-L[2\ell+4,\,2m+4,\,r+z+p+5].
\end{aligned}
\]
Taking \(z=r,r+1\) and using \(xL[a,b,x]=F_*[a,b,x]-L[a,b]\),
we obtain
\begin{equation}\label{app:kappa-N}
\kappa_N
:=[z^{k+2}]P_N(z)
=-\Bigl(
F_*[2\ell+4,\,2m+4,\,2k+5]
-F_*[2\ell+4,\,2m+4,\,2k+4]
\Bigr).
\end{equation}

	\section*{Acknowledgements}
All authors are supported by the  National Natural Science Foundation of China (Nos. 12571190, 12322109) and  Hunan Basic Science Research Center for Mathematical Analysis (2024JC2002).


\begin{thebibliography}{79}	
		
		\bibitem{Arnold}
		V. I. Arnold,
		{\it Geometric methods in the theory of ordinary differential equations, 2nd ed.}, Springer-Verlag, New York, 1987.
		
		\bibitem{BuicaLlibre}
		A. Buic\u{a}, J. Llibre,
		Averaging methods for finding periodic orbits via Brouwer degree,
		{\it Bull. Sci. Math.}, {\bf 128} (2004), 7--22.

        \bibitem{BNY}
G. Binyamini, D. Novikov, S. Yakovenko, On the number of zeros of Abelian integrals: A constructive solution of the infinitesimal Hilbert sixteenth problem,
{\it Invent. Math.}, {\bf 181} (2010), 227--289.
		
		\bibitem{CFJM}
		H. Chen, Z. Feng, M. Jia, Y. Meng, Maximum number of limit cycles of the piecewise linear Li\'enard system with three zones, \textit{J. Lond. Math. Soc. (2)}, \textbf{113} (2026), Paper No. e70430.
		
		\bibitem{CJT}
		H. Chen, M. Jia, Y. Tang, Topological classifications of a piecewise linear Li\'enard system with three zones,  {\it J. Differential Equations},  {\bf 399} (2024), 1--47.
		
		\bibitem{CLXY}
		H. Chen, D. Li, J. Xie, Y. Yue, Limit cycles in planar continuous piecewise linear systems, \textit{Commun. Nonlinear Sci. Numer. Simul.}, \textbf{47} (2017), 438--454.
		
		\bibitem{CLZZ}
		H. Chen, Z. Li, R. Zhang,  X. Zhang,
		The number of limit cycles of piecewise linear Li\'enard systems,
		arXiv:2608.19542, 2026.
		
		
      
        
		\bibitem{DM}
		P. De Maesschalck,  F. Dumortier,
		Classical Li\'enard equations of degree \(n\geq6\) can have \([(n-1)/2]+2\) limit cycles,
		{\it J. Differential Equations}, {\bf 250} (2011), 2162--2176.
          \bibitem{DM2}
 P. De Maesschalck, R. Huzak, Slow divergence integrals in classical Li\'enard equations near centers,  {\it J.
Dynam. Differential Equations}, {\bf 27} (2015), no. 1, 177–185.
		\bibitem{DPR}
		F. Dumortier, D. Panazzolo, R. Roussarie,
		More limit cycles than expected in Li\'enard equations,
		{\it Proc. Amer. Math. Soc.}, {\bf 135} (2007), 1895--1904.

       
		
        \bibitem{Gab}
		A. Gabrielov, N. Vorobjov, Complexity of computations with Pfaffian and Noetherian functions, in {\it Normal forms, bifurcations and finiteness problems in differential equations},  NATO Sci. Ser. II Math. Phys. Chem., 137, Kluwer Acad. Publ., Dordrecht, 2004, 211--250.
		
		\bibitem{H}
		D. Hilbert, Mathematical problems, {\it Bull. Amer. Math. Soc.}, {\bf 8} (1902), 437–479.

        \bibitem{K}
A. G. Khovanskii, Real analytic manifolds with the property of finiteness, and complex Abelian integrals,
{\it Funct. Anal. Appl.}, {\bf 18} (1984), 40--50.


		\bibitem{LL}
		C. Li,  J. Llibre,
		Uniqueness of limit cycles for Li\'enard differential equations
		of degree four,
		{\it J. Differential Equations}, {\bf 252} (2012), 3142--3162.
\bibitem{LL2}
        C. Li, K. Lu, Slow divergence integral and its application to classical Li\'enard equations of degree 5, {\it J. Differential Equations},  {\bf 257} (2014), 4437–4469.
		
		\bibitem{LMP}
		A. Lins, W. de Melo, C. C. Pugh, On Li\'enard's equation, {\it Lecture Notes in Math.}, {\bf 597} (1977), 335–357.
		
		
		
		\bibitem{LPV}
		J. Llibre, E. Ponce, C. Valls, Uniqueness and nonuniqueness of limit cycles for piecewise linear differential systems with three zones and no symmetry, {\it J. Nonlinear Sci.}, {\bf 25} (2015), 861--887.
		
		\bibitem{LPV2}
		J. Llibre, E. Ponce,  C. Valls,
		Two limit cycles in Li\'enard piecewise linear differential
		systems,
		{\it J. Nonlinear Sci.}, {\bf 29} (2019), 1499--1522.
		
		
		\bibitem{LZ}
		J. Llibre, X. Zhang,
		Limit cycles of the classical Li\'enard differential systems:
		a survey on the Lins Neto, de Melo and Pugh's conjecture,
		{\it Expo. Math.}, {\bf 35} (2017), 286--299.
		
		\bibitem{M1}
		H. P. McKean,  Nagumo's equation,
		{\it Adv. Math.}, {\bf 4} (1970), 209--223.
		
		\bibitem{M2}
		H. P. McKean,
		Stabilization of solutions of a caricature of the
		FitzHugh--Nagumo equation,
		{\it Comm. Pure Appl. Math.}, {\bf 36} (1983), 291--324.        
		
		
		\bibitem{Smale}
		S. Smale,
		Dynamics retrospective: great problems, attempts that failed,  {\it Phys. D}, {\bf 51} (1991), 267--273.
		
		\bibitem{S2}
		S. Smale, Mathematical problems for the next century, {\it Math. Intell.},  {\bf 20} (1998), 7–15.
		
		
		\bibitem{T}
		A. Tonnelier, The McKean’s caricature of the FitzHugh-Nagumo model. I. The space–clamped system,
		{\it SIAM J. Appl. Math.},  {\bf 63} (2002), 459–484.
		
		\bibitem{V}
A. N. Varchenko, Estimation of the number of zeros of an Abelian integral depending on a parameter, and limit cycles,
{\it Funct. Anal. Appl.}, {\bf 18} (1984), 14--25.
		
		
		
		
		
		
		
		
		
		
	\end{thebibliography}
\end{document}